\renewcommand{\le}{\leqslant}
\renewcommand{\ge}{\geqslant}
\begin{document}
\section{Introduction}
\label{sec:introduction}

Point process theory is considered as a useful tool in the analyses of
extremal events of stochastic processes due to its close connection to
the concept of regular variation, which plays an important role in
modelling extremal events. For example, for an iid copies
$(X_i)_{i\ge 1}$ of a random element $X$, it is well known that
regular variation of $X$ is 
equivalent to the convergence of point processes
\begin{equation*}
N_n=\sum_{i=1}^n\delta_{(i/n,X_i/a_{n})}
\end{equation*}
towards a suitable Poisson point process. This result is referred to
as the complete convergence result; see for example \cite{RES87}. A
large number of literatures focus on the extension of this result and
its applications; see \cite{DH95,
  DM98,Basrak:2012br,Basrak:2016gg}, to name a few. There are also a
large amount of literatures on the applications of regular variation
in extreme value theory, such as
\cite{EKM97,Resnick:2007jl,BS09,Davis:2013db}, to name a few. 

The classical formulation of regular variation for measures on a space
such as $\mathbb{R}$ and $\mathbb{R}^d$ uses vague convergence; see
for instance \cite{RES87,EKM97} for more details. A
generalization of vague convergence is $w^{\#}$-convergence introduced
by \cite{Daley:2003tf, Daley:2008kf} in which point process theory
defined via $w^{\#}$-convergence is well developed. Consider a
complete and separable space 
$(\mathbb{S},d)$ with a point $0_{\mathbb{S}}\in \mathbb{S}$. Let
$\mathbb{M}_b(\mathbb{S})$ denote the class of totally finite
measures on $\mathcal{S}=\mathcal{B}(\mathbb{S})$, let
$\mathbb{M}_+(\mathbb{S})$ denote the class of measures finite on
compact Borel sets in $\mathcal{S}$ and let
$\mathbb{M}_{\#}(\mathbb{S})$ denote the class of measures finite on
bounded Borel sets in $\mathcal{S}$. One can define weak
convergence, vague convergence, $w^{\#}$-convergence in the
corresponding class of measures $\mathbb{M}_b(\mathbb{S})$,
$\mathbb{M}_+(\mathbb{S})$ and $\mathbb{M}_{\#}(\mathbb{S})$. It
should be noted as explained in \cite{Daley:2003tf} that
$w^{\#}$-convergence coincides with vague convergence when
$\mathbb{S}$ is locally compact. 
Borel sets bounded away from the origin are of interest 
in the field of extreme value theory and similar to the ideas of
$w^{\#}$-convergence, let
$\mathbb{M}_0=\mathbb{M}_{0}(\mathbb{S})$ denote the class of the measures finite on
complements of neighborhoods of the point $0_{\mathbb{S}}$ and regular
variation of measures in $\mathbb{M}_0(\mathbb{S})$ is defined via the
concept of $\mathbb{M}_{0}$-convergence; see \cite{HL06} for
details. In several aspects, $\mathbb{M}_0$-convergence has a lot of
similarities to $w^{\#}$-convergence and it allows us to use the
results in \cite{Daley:2003tf,Daley:2008kf} to construct point process
theory via $\mathbb{M}_0$-convergence.

The effort of generalizing the concept of $\mathbb{M}_0$-convergence
focuses on allowing the introduction of hidden regular variation into
the framework and studying simultaneously regular-variation properties
at different scales. The basic idea is to removing a closed set
$\mathbb{C}$ instead of one point $0_{\mathbb{S}}$, which leads to
$\mathbb{M}_{\mathbb{O}}$-convergence where
$\mathbb{O}=\mathbb{S}\setminus \mathbb{C}$; see
\cite{Lindskog:2014ia} for a formal definition. Moreover, if
$\mathbb{C}$ is a closed cone, $\mathbb{M}_{\mathbb{O}}$-convergence
shares many properties with $\mathbb{M}_0$-convergence;
see \cite{HL06,Lindskog:2014ia}. For applications of the concept of
$\mathbb{M}_{\mathbb{O}}$-convergence, it is necessary to develop
point process theory as a useful and elegant tool. The basic idea of
this paper is to define point processes via
$\mathbb{M}_{\mathbb{O}}$-convergence and then to use them to analyse
the heavy-tailed processes in a metric space. 

The paper is arranged as follows. Section~\ref{sec:m0} gives an
introduction to the spaces of measures on a complete and separable
space, and $\mathbb{M}_{\mathbb{O}}$-convergence. Definitions and
properties of random measures and point processes are given in 
Section~\ref{sec:rand-meas-lapl}. Poisson processes play an important
role in applications of point processes, which is discussed in
Section~\ref{sec:poisson-processes}. The complete convergence result
of point processes is proved under $\mathbb{M}_{\mathbb{O}}$-convergence in
Section~\ref{sec:regular-variation}. Section~\ref{sec:discussions}
provides brief discussions on the choice of $\mathbb{O}$ and spaces of
measures. Section~\ref{sec:proof} contains technical proofs
of theorems in the previous sections.

\section{The spaces of measures}
\label{sec:m0}
Let $(\mathbb{S},d)$ be a complete and separable space and let
$\mathcal{S}=\mathcal{B}(\mathbb{S})$ denote the Borel $\sigma$-field
on $\mathbb{S}$, which is generated by open balls $B_{r}(x)=\{y\in
\mathbb{S}:d(x,y)<r\}$ for $x\in \mathbb{S}$. Assume that
$\mathbb{C}\in \mathcal{S}$ is a closed subset and let
$\mathbb{O}=\mathbb{S}\setminus \mathbb{C}$. The $\sigma$-algebra
$\mathcal{S}_{\mathbb{O}}=\{A\in \mathcal{S}:A\subset
\mathbb{O}\}$. Let
$\mathcal{C}_{\mathbb{O}}=\mathcal{C}_{\mathbb{O}}(\mathbb{S})$ be a
collection of real-valued, non-negative, bounded continuous functions
$f$ on $\mathbb{O}$ vanishing on $\mathbb{C}^r=\{x\in
\mathbb{S}:d(x,\mathbb{C})=\inf_{y\in \mathbb{C}}d(x,y)<r\}$ for some
$r>0$. We say that a set $A\in \mathcal{S}_{\mathbb{O}}$ is bounded away from
$\mathbb{C}$ if $A\subset \mathbb{S}\setminus \mathbb{C}^r$ for some
$r>0$. To define regular variation of measures in
$\mathbb{M}_{\mathbb{O}}$, $\mathbb{S}$ is assumed to be equipped with
scalar multiplication; see Section 3.1, \cite{Lindskog:2014ia}.
\begin{definition}\label{def:scalar}
  A scalar multiplication on $\mathbb{S}$ is a map $[0,\infty)\times
  \mathbb{S}\to \mathbb{S}: (\lambda,x)\to \lambda x$ satisfying the
  following properties:
  \begin{enumerate}[(i)]
  \item $\lambda_1(\lambda_2 x)=(\lambda_1 \lambda_2)x$ for all
    $\lambda_1,\lambda_2\in [0,\infty)$ and $x\in \mathbb{S}$;
\item\label{item:6} $1x=x$ for $x\in \mathbb{S}$;
\item\label{item:7} the map is continuous with respect to the product
  topology;
\item\label{item:8} if $x\in \mathbb{O}$ and if $0\le
  \lambda_1<\lambda_2$, then $d(\lambda_1 x,\mathbb{C})<d(\lambda_2
  x,\mathbb{C})$. 
  \end{enumerate}
\end{definition}
Assume additionally that $\mathbb{C}$ is a cone, that is, $\lambda
\mathbb{C}=\mathbb{C}$ for $\lambda\in (0,\infty)$. Let $x\in \mathbb{O}$. For any $\lambda\in [0,\infty)$,
we have $\lambda (0x)=(\lambda 0)x=0x$ by (i) in
Definition~\ref{def:scalar}. It follows that
$d(\lambda_1(0x),\mathbb{C})=d(0x,\mathbb{C})=d(\lambda_2(0x),\mathbb{C})$
for all $\lambda_1,\lambda_2\in (0,\infty)$. The condition (iv) in
Definition~\ref{def:scalar} implies that $0x\in \mathbb{C}$. If $x\in
\mathbb{C}$, $0x=0(0x)$ and the above argument implies that $0x\in
\mathbb{C}$. So $0\mathbb{C}\subset \mathbb{C}$ and $\mathbb{O}$ is an open cone.  

We always assume that the metric space $(\mathbb{S},d, \mathbb{C})$ is equipped
with a scalar multiplication defined in Definition~\ref{def:scalar}
and $\mathbb{C}$ is a closed cone. For
simplicity, we will write the space $(\mathbb{S},d,\mathbb{C})$ as
$(\mathbb{S},d)$ or $\mathbb{S}$. 
\subsection{The space $\mathbb{M}_{\mathbb{O}}$ and
  $\mathbb{M}_{\mathbb{O}}$-convergence} 
\label{sec:space-mathbbm_m}

Let $\mathbb{M}_{\mathbb{O}}=\mathbb{M}_{\mathbb{O}}(\mathbb{S})$ be the space
of Borel measures on $\mathcal{S}_{\mathbb{O}}$ that
are bounded on complements of $\mathbb{C}^r$, $r>0$. The convergence
$\mu_n\to \mu$ in $\mathbb{M}_{\mathbb{O}}$ or $\mu_n\overset{M}{\to}
\mu$ holds if and only if $\int
f\,d\mu_n\to \int f\,d\mu$ for all $f\in
\mathcal{C}_{\mathbb{O}}$. Versions of the Portmanteau and continuous
mapping theorem for $\mathbb{M}_{\mathbb{O}}$-convergence are stated
as Theorem 2.1 and Theorem 2.3, respectively, in
\cite{Lindskog:2014ia}. By choosing the metric
\begin{eqnarray*}
  d_{\mathbb{M}_{\mathbb{O}}}(\mu,\upsilon)=
  \int_0^{\infty}e^{-r}\frac{p_r(\mu^{(r)},\upsilon^{(r)})}{1+p_r(\mu^{(r)},\upsilon^{(r)})}\,,
  \quad \mu,\upsilon\in \mathbb{M}_\mathbb{O}\,,
\end{eqnarray*}
where $\mu^{(r)}$, $\upsilon^{(r)}$ are the finite restriction of
$\mu,\upsilon$ to $\mathbb{S}\setminus \mathbb{C}^r$ and $p_r$ is the
Prohorov metric on the space of finite Borel measures on
$\mathcal{B}(\mathbb{S}\setminus \mathbb{C}^r)$. It is shown in
\cite{Lindskog:2014ia} that
$d_{\mathbb{M}_{\mathbb{O}}}(\mu,\upsilon)\to 0$ as $n\to \infty$ if
and only if $\mu_n\to \mu$ in $\mathbb{M}_{\mathbb{O}}$ and the space
$(\mathbb{M}_{\mathbb{O}},d_{\mathbb{M}_{\mathbb{O}}})$ is complete
and separable. Denote $\mathcal{M}_{\mathbb{O}}$ as the
$\sigma$-algebra of $\mathbb{M}_{\mathbb{O}}$, which is generated by
the neighborhoods of $\mu\in \mathbb{M}_{\mathbb{O}}$
\begin{eqnarray*}
 \Big\{\upsilon\in \mathbb{M}_{\mathbb{O}}:\Big|\int f_i\,d\upsilon-\int
  f_i\,d\mu\Big|<\varepsilon,\; i=1,\ldots,k\Big\}\,,
\end{eqnarray*}
where $\varepsilon>0$ and $f_i\in \mathcal{C}_{\mathbb{O}}$ for
$i=1,\ldots,k$. 
\begin{proposition}\label{prop:algebra}
  The Borel $\sigma$-algebra $\mathcal{M}_{\mathbb{O}}$ is the
  smallest $\sigma$-algebra with respect to which the mappings
  $\Phi_{A}:\mathbb{M}_{\mathbb{O}}\mapsto \mathbb{R}\cup \{\pm \infty\}$ given by
\begin{eqnarray*}
 \Phi_A(\mu)=\mu(A)\,, 
\end{eqnarray*}
are measurable for all sets $A$ in a $\pi$-system $\mathcal{D}$ generating
$\mathcal{S}_{\mathbb{O}}$ and in particular for the sets $A\in
\mathcal{S}_{\mathbb{O}}$ bounded away from $\mathbb{C}$. 
\end{proposition}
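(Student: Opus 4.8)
The plan is to prove the two containments of $\sigma$-algebras separately. Write $\sigma(\Phi_{\mathcal D})$ for the smallest $\sigma$-algebra on $\mathbb{M}_{\mathbb{O}}$ making all the evaluation maps $\Phi_A$, $A\in\mathcal D$, measurable, where $\mathcal D$ is a $\pi$-system generating $\mathcal S_{\mathbb O}$ and consisting of sets bounded away from $\mathbb C$ (such a $\mathcal D$ exists: e.g.\ finite intersections of sets of the form $\{d(x,\mathbb C)>1/m\}\cap B_r(x_k)$ with $x_k$ ranging over a countable dense set, which generates $\mathcal S_{\mathbb O}$ and is closed under finite intersections). We must show $\sigma(\Phi_{\mathcal D})=\mathcal M_{\mathbb O}$.

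First I would show $\sigma(\Phi_{\mathcal D})\subset\mathcal M_{\mathbb O}$. It suffices to check that each $\Phi_A$ with $A\in\mathcal D$ is $\mathcal M_{\mathbb O}$-measurable. Fix such an $A$; since $A$ is bounded away from $\mathbb C$, $A\subset\mathbb S\setminus\mathbb C^r$ for some $r>0$, so $\mu\mapsto\mu(A)$ is a bounded nonnegative functional on $\mathbb{M}_{\mathbb O}$ depending only on the finite restriction $\mu^{(r)}$. The standard approximation argument for the Prohorov/weak topology applies: for a closed set $F\subset\mathbb S\setminus\mathbb C^r$ the map $\mu\mapsto\mu(F)$ is upper semicontinuous for $\mathbb{M}_{\mathbb O}$-convergence (approximate $\1_F$ from above by functions $f_\delta\in\mathcal C_{\mathbb O}$, e.g.\ $f_\delta(x)=(1-\delta^{-1}d(x,F))^+\wedge g(x)$ with $g\in\mathcal C_{\mathbb O}$ a cutoff equal to $1$ on $\mathbb S\setminus\mathbb C^{r}$ near $F$, using the Portmanteau theorem, Theorem~2.1 of \cite{Lindskog:2014ia}), hence Borel measurable; then $\mu\mapsto\mu(G)$ for open $G$ bounded away from $\mathbb C$ is lower semicontinuous, and a monotone/Dynkin-class argument extends measurability of $\Phi_A$ from the closed (or open) bounded-away sets to all of $\mathcal S_{\mathbb O}$, in particular to $A\in\mathcal D$. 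Since these are finitely many generating functionals, $\sigma(\Phi_{\mathcal D})\subset\mathcal M_{\mathbb O}$.

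For the reverse inclusion $\mathcal M_{\mathbb O}\subset\sigma(\Phi_{\mathcal D})$, I would show every basic neighborhood $\{\upsilon:|\int f_i\,d\upsilon-\int f_i\,d\mu|<\eps,\ i\le k\}$ lies in $\sigma(\Phi_{\mathcal D})$, which reduces to showing that for each fixed $f\in\mathcal C_{\mathbb O}$ the map $\mu\mapsto\int f\,d\mu$ is $\sigma(\Phi_{\mathcal D})$-measurable. Because $f$ vanishes on $\mathbb C^r$ for some $r$, $f$ is a bounded function supported on $\mathbb S\setminus\mathbb C^r$; approximate it by simple functions $\sum_j c_j\1_{A_j}$ with $A_j$ Borel subsets of $\mathbb S\setminus\mathbb C^r$, so $\int f\,d\mu$ is a pointwise limit of finite linear combinations of $\mu(A_j)$. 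Thus it is enough that $\Phi_B$ is $\sigma(\Phi_{\mathcal D})$-measurable for every Borel $B$ bounded away from $\mathbb C$; this is the functional form of the Dynkin $\pi$–$\lambda$ theorem applied in the variable $B$: the class $\{B\in\mathcal S_{\mathbb O}:\Phi_B\text{ is }\sigma(\Phi_{\mathcal D})\text{-measurable}\}$ contains the $\pi$-system $\mathcal D$, is closed under proper differences within a fixed $\mathbb S\setminus\mathbb C^r$ (as $\Phi_{B\setminus A}=\Phi_B-\Phi_A$ when both sit in the same finite-measure shell) and under increasing countable unions (pointwise monotone limits of finite measures), hence contains $\sigma(\mathcal D)\cap\{\text{bounded away from }\mathbb C\}=\mathcal S_{\mathbb O}$ restricted to each shell; letting $r\downarrow 0$ covers all of $\mathcal S_{\mathbb O}$. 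Combining, $\mathcal M_{\mathbb O}=\sigma(\Phi_{\mathcal D})$.

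The main obstacle is the bookkeeping around the truncation parameter $r$: $\mathbb{M}_{\mathbb O}$-measures are only $\sigma$-finite (finite on each shell $\mathbb S\setminus\mathbb C^r$), so the Dynkin-class manipulations—complementation and differences—must be carried out inside a fixed shell where everything is a genuine finite measure, and only afterwards reassembled by letting $r\downarrow 0$ along a sequence; one must also make sure the approximating functions $f_\delta$ used for upper semicontinuity of $\mu\mapsto\mu(F)$ genuinely belong to $\mathcal C_{\mathbb O}$, i.e.\ are bounded, continuous, nonnegative, and vanish on some $\mathbb C^{r'}$, which is where property (iv) of Definition~\ref{def:scalar} and the fact that $\mathbb C^{r'}$ decreases to $\mathbb C$ are used. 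Everything else is the routine Portmanteau/monotone-class machinery, exactly parallel to the analogous statements for $w^{\#}$-convergence in \cite{Daley:2003tf,Daley:2008kf}.
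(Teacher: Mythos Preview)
Your approach is essentially the same as the paper's: both directions are handled by Dynkin/monotone-class arguments, with measurability of $\Phi_F$ for closed $F$ bounded away from $\mathbb C$ obtained via semicontinuity (the paper phrases this as openness of the sublevel sets $\{\upsilon:\upsilon(F)<y\}$, after first relating $d_{\mathbb M_{\mathbb O}}$-balls to Prohorov balls on the shells $\mathbb S\setminus\mathbb C^r$; you go straight through Portmanteau, which is a bit more direct). The shell-by-shell bookkeeping you flag is exactly what the paper does as well.

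One point you should make explicit in the reverse inclusion: showing that every basic neighborhood $\{\upsilon:|\int f_i\,d\upsilon-\int f_i\,d\mu|<\eps,\ i\le k\}$ lies in $\sigma(\Phi_{\mathcal D})$ does not by itself give $\mathcal M_{\mathbb O}\subset\sigma(\Phi_{\mathcal D})$, because an arbitrary open set is a priori an uncountable union of such neighborhoods. You need the separability of $(\mathbb M_{\mathbb O},d_{\mathbb M_{\mathbb O}})$ to write every open set as a \emph{countable} union of basic sets; the paper invokes this explicitly at the end of its proof. Also, property~(iv) of Definition~\ref{def:scalar} is not actually needed to build your approximants $f_\delta\in\mathcal C_{\mathbb O}$: if $F\subset\mathbb S\setminus\mathbb C^r$ and $\delta<r/2$, then $x\mapsto(1-\delta^{-1}d(x,F))^+$ already vanishes on $\mathbb C^{r/2}$, so no cutoff or scalar-multiplication structure is required.
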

The proof is technical and is put in Section~\ref{sec:proof}. 
\subsection{The space $\mathbb{N}_{\mathbb{O}}$}
\label{sec:space-n}

Let $\mathbb{N}_{\mathbb{O}}$ be the space of all measures $N\in
\mathbb{M}_{\mathbb{O}}$ satisfying that for each $r>0$, $N(A\setminus
\mathbb{C}^r)$ is a non-negative integer for all $A\in \mathcal{S}$. We
call the measure $N$ a counting measure for short. For $x\in S_0$ and
$\mu\in \mathbb{M}_{\mathbb{O}}$, we say that the measure $\mu$ has an atom
$x$ if $\mu(\{x\})>0$. A measure with only atoms is {\em purely
  atomic}, while a {\em diffuse} measure has no atom. We use
$\delta_x$ to denote Dirac measure at $x\in \mathbb{O}$, defined on $A\in
\mathcal{S}_{\mathbb{O}}$ by
\begin{eqnarray*}
\delta_x(A)=
  \begin{cases}
    1\,, & x\in A\,, \\
0\,, & x\notin A\,.
  \end{cases}
\end{eqnarray*}
The following lemma shows that the counting measure is purely atomic. 
\begin{lemma}\label{lem:decomposition}
Assume that the measure $\mu\in\mathbb{M}_{\mathbb{O}}$. 
  \begin{enumerate}[(i)]
  \item The measure $\mu$ is uniquely decomposable as
    $\mu=\mu_a+\mu_d$, where 
\begin{equation}
\label{eq:decomposition}
\mu_a=\sum_{i=1}^{\infty}\kappa_i \delta_{x_i}
\end{equation}
is a purely
atomic measure, uniquely determined by a countable set
$\{(x_i,\kappa_i)\}\subset \mathbb{O}\times (0,\infty)$, and $\mu_d$ ia a
diffuse measure. 
\item\label{item:1} A measure $N\in \mathbb{M}_{\mathbb{O}}$ is a counting
  measure if and only if (1) its diffuse component is null, (2) all
$\kappa_i$ in \eqref{eq:decomposition} are positive integers, and (3)
the set $\{x_i\}$ defined in \eqref{eq:decomposition} is a countable
set with at most finite many $x_i$ in any set $\mathbb{S}\setminus
\mathbb{C}^r$ with $r>0$.
  \end{enumerate}
\end{lemma}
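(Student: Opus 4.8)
The plan is to reduce both parts of the lemma to the classical atomic--diffuse decomposition of a totally finite Borel measure on a separable metric space, applied to the finite restrictions $\mu^{(r)}$ of $\mu$ to the closed sets $\mathbb{S}\setminus\mathbb{C}^r$, and then to glue these restrictions consistently as $r\downarrow 0$. Fix a sequence $r_n\downarrow 0$; since $x\mapsto d(x,\mathbb{C})$ is continuous and $\mathbb{C}$ is closed, each $\mathbb{S}\setminus\mathbb{C}^{r_n}=\{x:d(x,\mathbb{C})\ge r_n\}$ is closed (hence complete and separable as a subspace of $\mathbb{S}$) and these sets increase to $\mathbb{O}=\{x:d(x,\mathbb{C})>0\}$.

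For part (i) I would first handle a single restriction. For each $n$, $\mu^{(r_n)}$ is a totally finite Borel measure on $\mathbb{S}\setminus\mathbb{C}^{r_n}$, so its set of point masses $\{x:\mu(\{x\})>0,\ d(x,\mathbb{C})\ge r_n\}$ is at most countable (for every $k$ there are at most $k\,\mu(\mathbb{S}\setminus\mathbb{C}^{r_n})$ points of mass exceeding $1/k$), and writing $\mu^{(r_n)}_a=\sum_i\mu(\{x_i\})\,\delta_{x_i}$ over those points and $\mu^{(r_n)}_d=\mu^{(r_n)}-\mu^{(r_n)}_a\ge 0$ gives the decomposition into a purely atomic and a diffuse part. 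Whether a point $x$ carries positive mass, and the value $\mu(\{x\})$, do not depend on which restriction one looks at as long as $d(x,\mathbb{C})\ge r_n$, so these decompositions are consistent: $\mu^{(r_m)}_a$ and $\mu^{(r_m)}_d$ restrict to $\mu^{(r_n)}_a$ and $\mu^{(r_n)}_d$ for $m\ge n$. Passing to the increasing union produces a countable set $\{(x_i,\kappa_i)\}\subset\mathbb{O}\times(0,\infty)$ comprising all point masses of $\mu$, and I would set $\mu_a=\sum_i\kappa_i\,\delta_{x_i}$; then $\mu_a\le\mu$ (the $x_i$ are distinct and $\{x_i:x_i\in A\}\subset A$), so $\mu_a\in\mathbb{M}_{\mathbb{O}}$, while $\mu_d=\mu-\mu_a$ is a non-negative measure in $\mathbb{M}_{\mathbb{O}}$, diffuse because $\mu_d(\{x\})=\mu(\{x\})-\mu_a(\{x\})=0$ for every $x\in\mathbb{O}$. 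Uniqueness is immediate: in any decomposition $\mu=\nu_a+\nu_d$ with $\nu_a$ purely atomic and $\nu_d$ diffuse one has $\mu(\{x\})=\nu_a(\{x\})$ for all $x$, which forces $\nu_a=\mu_a$ and $\nu_d=\mu_d$.

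For part (ii) the ``if'' direction is routine: if $N=\sum_i\kappa_i\,\delta_{x_i}$ with all $\kappa_i\in\mathbb{N}$ and only finitely many $x_i$ in each $\mathbb{S}\setminus\mathbb{C}^r$, then for $A\in\mathcal{S}$ and $r>0$ the quantity $N(A\setminus\mathbb{C}^r)=\sum_{i:\,x_i\in A\setminus\mathbb{C}^r}\kappa_i$ is a finite sum of positive integers, hence in $\mathbb{Z}_{\ge 0}$. For the ``only if'' direction I would invoke part (i): it suffices to show each $N^{(r)}$ is a finite sum of integer-weighted Dirac masses, and then let $r=r_n\downarrow 0$ and use the consistency above to conclude that the decomposition of $N$ has null diffuse component, positive-integer weights, and finitely many atoms in every $\mathbb{S}\setminus\mathbb{C}^r$. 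Fix $r>0$. If the diffuse part $N^{(r)}_d$ of $N^{(r)}$ were nonzero, then, using that a diffuse totally finite Borel measure on a complete separable metric space is non-atomic in the measure-theoretic sense, Sierpi\'nski's intermediate-value theorem applied to $N^{(r)}_d$ on the complement of the (countable) set of point masses of $N^{(r)}$ would produce a Borel set $B\subset\mathbb{S}\setminus\mathbb{C}^r$ with $N(B\setminus\mathbb{C}^r)=N^{(r)}(B)=N^{(r)}_d(B)$ a prescribed non-integer in the open interval $(0,N^{(r)}_d(\mathbb{S}\setminus\mathbb{C}^r))$, contradicting $N\in\mathbb{N}_{\mathbb{O}}$. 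Hence $N^{(r)}$ is purely atomic; each of its atoms has positive integer mass (equal to $N(\{x\}\setminus\mathbb{C}^r)$), and since these masses are $\ge 1$ and sum to the finite integer $N^{(r)}(\mathbb{S}\setminus\mathbb{C}^r)$ there are only finitely many.

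The only genuinely non-routine step is the last one --- ruling out a diffuse part of an integer-valued totally finite Borel measure on $\mathbb{S}\setminus\mathbb{C}^r$. Besides the appeal to Sierpi\'nski's theorem, one can argue by hand: using separability, partition $\mathbb{S}\setminus\mathbb{C}^r$, and then recursively a chosen cell of positive mass, into countably many Borel pieces of diameter at most $1/k$; because the integer masses of the pieces at each stage sum to a fixed finite integer, some cell keeps mass $\ge 1$, yielding a decreasing sequence of sets of vanishing diameter whose closures all have mass $\ge 1$, and hence by continuity from above a single point of positive mass, contradicting diffuseness. Everything else is bookkeeping around the classical single-measure decomposition and the exhaustion $\mathbb{S}\setminus\mathbb{C}^{r_n}\uparrow\mathbb{O}$.
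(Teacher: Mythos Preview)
Your proof is correct and follows essentially the same strategy as the paper: both reduce to the totally finite restrictions of $\mu$ to $\mathbb{S}\setminus\mathbb{C}^{r}$ (the paper uses the disjoint annuli $\mathbb{C}^{r_j}\setminus\mathbb{C}^{r_{j+1}}$, you use the nested sets $\mathbb{S}\setminus\mathbb{C}^{r_n}$, which is an inessential difference) and then appeal to the classical atomic--diffuse decomposition and the characterization of integer-valued finite measures. The only real distinction is that the paper outsources these classical facts to Daley--Vere-Jones (Appendix~A1.6 and Proposition~9.1.III), whereas you spell them out, including the Sierpi\'nski/nested-partition argument to exclude a diffuse component; your version is therefore more self-contained but not a different route.
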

\begin{proof}
 Let $r_j=1/j$, $j=1,2,\ldots$. Let
 $\mathbb{O}^{(1)}=\mathbb{S}\setminus \mathbb{C}^{r_1}$ and
 $\mathbb{O}^{(j+1)}=\mathbb{C}^{r_j}\setminus \mathbb{C}^{r_{j+1}}$,
 $j=1,2,\ldots$. Then,
 $\mathbb{O}=\cup_{j=1}^{\infty}\mathbb{O}^{(j)}$. By definiton of
 $\mathbb{M}_{\mathbb{O}}$, if $\mu\in \mathbb{M}_{\mathbb{O}}$, the
 measure $\mu_j(\cdot)=\mu(\cdot\cap \mathbb{O}^{(r_j)})$ and hence
 $\mu$ is $\sigma$-finite. Part (i) is a property of $\sigma$-finite
 measures; see Appendix A1.6, \cite{Daley:2003tf} for details.  

Since $\mu_j$ is finite, Proposition 9.1.III in \cite{Daley:2008kf}
implies that $\mu_j$ is a counting measure if and only if all the
three conditions in (ii) are satisfied. Moreover, if $\mu_j$ is a
counting measure, all of its atoms must lie in
$\mathbb{O}^{(r_j)}$. Because $\mathbb{O}^{(r_j)}$ are disjoint sets and
$\mu=\sum_{j=1}^{\infty}\mu_j$, the measure $\mu$ is a counting measure if and only if
all the three conditions in (ii) are satisfied. 
\end{proof}

The following theorem is an application of Lemma~\ref{lem:decomposition}.
\begin{theorem}
  \label{thm:close}
$\mathbb{N}_{\mathbb{O}}$ is a closed subset of $\mathbb{M}_{\mathbb{O}}$. 
\end{theorem}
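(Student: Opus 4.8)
The plan is to establish closedness sequentially, which suffices because $(\mathbb{M}_{\mathbb{O}},d_{\mathbb{M}_{\mathbb{O}}})$ is metric. So suppose $N_n\in\mathbb{N}_{\mathbb{O}}$ with $N_n\overset{M}{\to}N$ in $\mathbb{M}_{\mathbb{O}}$; I want to show $N\in\mathbb{N}_{\mathbb{O}}$ by verifying, for each $r>0$, the three conditions of Lemma~\ref{lem:decomposition}(ii) on the finite restriction $N^{(r)}$ of $N$ to $\mathbb{S}\setminus\mathbb{C}^r$.

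First I would push everything down to the level of totally finite measures. From the description of $d_{\mathbb{M}_{\mathbb{O}}}$ (equivalently, from the Portmanteau theorem, Theorem 2.1 in \cite{Lindskog:2014ia}), $N_n\overset{M}{\to}N$ forces weak convergence of the restrictions $N_n^{(r)}\to N^{(r)}$ in $\mathbb{M}_b(\mathbb{S}\setminus\mathbb{C}^r)$ for every $r>0$ outside the at most countable set $D=\{r>0:N(\{x:d(x,\mathbb{C})=r\})>0\}$; such $r$ are dense in $(0,\infty)$. Each $N_n^{(r)}$ is a finite counting measure on the Polish space $\mathbb{S}\setminus\mathbb{C}^r=\{x:d(x,\mathbb{C})\ge r\}$, being a restriction of $N_n\in\mathbb{N}_{\mathbb{O}}$. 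Hence the crux is the classical fact that the finite counting measures form a closed subset of the totally finite Borel measures on a Polish space under weak convergence (see Ch.~9 of \cite{Daley:2008kf}); for completeness the argument is short: weak convergence makes the total masses $N_n^{(r)}(\mathbb{S}\setminus\mathbb{C}^r)$ converge, and being non-negative integers they equal some fixed $m\in\NN$ for large $n$; by Prohorov's theorem $(N_n^{(r)})$ is tight, so since $N_n^{(r)}$ is $\ZZ$-valued all of its $m$ atoms lie in a fixed compact set $K$; writing $N_n^{(r)}=\sum_{i=1}^m\delta_{x_i^{(n)}}$ with $x_i^{(n)}\in K$, a subsequence of the tuples $(x_1^{(n)},\dots,x_m^{(n)})\in K^m$ converges, and its limit lies in the closed set $\mathbb{S}\setminus\mathbb{C}^r$, identifying $N^{(r)}=\sum_{i=1}^m\delta_{x_i}$ by uniqueness of weak limits. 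Thus $N^{(r)}$ is a finite counting measure for every $r\notin D$.

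Next I would remove the restriction $r\notin D$: for arbitrary $r>0$, choose $r'\in(0,r)\setminus D$; since $\mathbb{C}^{r'}\subset\mathbb{C}^r$, the measure $N^{(r)}$ is the restriction of the finite counting measure $N^{(r')}$ to the smaller closed set $\mathbb{S}\setminus\mathbb{C}^r$, hence is again a finite counting measure. Consequently, for every $r>0$ the restriction $N^{(r)}$ has null diffuse part, positive-integer weights, and only finitely many atoms in $\mathbb{S}\setminus\mathbb{C}^r$. As $\mathbb{O}=\bigcup_{r>0}(\mathbb{S}\setminus\mathbb{C}^r)$ is an increasing union, it follows that $N$ has null diffuse part, all its weights are positive integers, and it has at most finitely many atoms in each $\mathbb{S}\setminus\mathbb{C}^r$; by Lemma~\ref{lem:decomposition}(ii), $N\in\mathbb{N}_{\mathbb{O}}$, which proves the theorem.

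The only non-formal step, and the main obstacle, is the weak-closedness of finite counting measures on the possibly non-compact space $\mathbb{S}\setminus\mathbb{C}^r$: a priori mass could escape to infinity and leave a limit of strictly smaller total mass (possibly with a diffuse part), and this is ruled out precisely by combining the tightness from Prohorov's theorem with the integrality of the masses. The reduction from $\mathbb{M}_{\mathbb{O}}$-convergence to weak convergence of the restrictions for all but countably many $r$ is the other technical point, handled by the Portmanteau theorem of \cite{Lindskog:2014ia}.
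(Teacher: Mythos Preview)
Your argument is correct, but it takes a different route from the paper's. You reduce to weak convergence of the finite restrictions $N_n^{(r)}\to N^{(r)}$ on $\mathbb{S}\setminus\mathbb{C}^r$ for all but countably many $r$, and then invoke (and sketch) the classical fact that finite counting measures on a Polish space form a weakly closed set, localising the atoms via Prohorov's theorem and extracting a convergent subsequence of atom tuples in $K^m$. The paper instead argues pointwise: for each $y\in\mathbb{O}$ it selects radii $r_j\downarrow 0$ with $N(\partial B_{r_j}(y))=0$, applies the Portmanteau theorem for $\mathbb{M}_{\mathbb{O}}$-convergence directly to the balls $B_{r_j}(y)$ to conclude $N(B_{r_j}(y))\in\ZZ_+$, and then invokes Lemma~\ref{lem:decomposition}. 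The paper's proof is shorter but leaves implicit the step from ``integer values on arbitrarily small balls around every point'' to the three conditions of Lemma~\ref{lem:decomposition}(ii); your argument is longer and brings in more machinery (Prohorov tightness, compactness in $K^m$), but it makes each step explicit and delivers the atomic decomposition of $N^{(r)}$ directly without relying on that implicit passage. Both approaches ultimately rest on the same two ingredients: the Portmanteau theorem from \cite{Lindskog:2014ia} to pass to continuity sets, and the fact that a limit of non-negative integers is a non-negative integer.
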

\begin{proof}
It is enough to show that the limit of a sequence in
$\mathbb{N}_{\mathbb{O}}$ is still in $\mathbb{N}_{\mathbb{O}}$. Let
$(N_k)_{k\in \mathbb{N}}$ be a sequence of counting measures and
$N_k\to N$ in $\mathbb{M}_{\mathbb{O}}$. Let $y$ be an arbitrary point
in $\mathbb{O}$. Since $N\in \mathbb{M}_{\mathbb{O}}$, for all but a
countable set of values of $r\in (0,d(y,\mathbb{C}))$, $N(\partial
B_{r}(y))=0$, where $\partial A$ is the boundary of a set $A\in
\mathcal{S}_{\mathbb{O}}$. We can find a decreasing sequence
$(r_j)_{j\in \mathbb{N}}$ such that $\lim_{j\to \infty}r_j=0$ and
$N(\partial B_{r_j}(y))=0$, $j\ge 1$. By Portmanteau theorem (Theorem
2.1, \cite{Lindskog:2014ia}), we have that for $j\ge 1$,
\begin{eqnarray*} 
N_k(B_{r_j}(y))\to N(B_{r_j}(y))\,, \quad k\to\infty\,.
\end{eqnarray*} Since $N_k(B_{r_j}(y))$ are non-negative integers,
$N(B_{r_j}(y))$ are also non-negative integers and thus $N$ is a
counting measure by Lemma~\ref{lem:decomposition}.
\end{proof}

\section{Random measures, point processes and weak convergence}
\label{sec:rand-meas-lapl}
In this section, the properties of random measures and point
processes are studied. We will show that weak convergence is
determined by the finite-dimensional convergence. By applying this
result, it can be shown that weak convergence is equivalent to
convergence of Laplace functionals, which will be frequently used 
in the following sections.
\begin{definition}
\begin{enumerate}[(i)]
  \item A random measure $\xi$ with state space $\mathbb{O}$ is a measurable
    mapping from a probability space $(\Omega,\mathcal{E}, P)$ into
    $(\mathbb{M}_{\mathbb{O}},\mathcal{M}_{\mathbb{O}})$. 
\item\label{item:2} A point process on $\mathbb{O}$ is a measurable mapping
  from a probability space $(\Omega,\mathcal{A}, P)$ into
  $(\mathbb{N}_{\mathbb{O}},\mathcal{N}_{\mathbb{O}})$. 
  \end{enumerate}
\end{definition}
A realization of a random measure $\xi$ has the value $\xi(A,\omega)$
on the borel set $A\in \mathcal{S}_{\mathbb{O}}$. For each fixed $A$,
$\xi_A=\xi(A,\cdot)$ is a function mapping $\Omega$ into
$\mathbb{R}^+=[0,\infty]$. The following proposition provides a convenient way to
examine whether a mapping is a random measure. 
\begin{theorem}\label{thm:randommeasure}
Let $\xi$ be a mapping from a probability space $(\Omega,
\mathcal{E},P)$ into $\mathbb{M}_{\mathbb{O}}$ and $\mathcal{D}$ be a
$\pi$-system of Borel sets bounded away from $\mathbb{C}$, which generates
$\mathcal{S}_{\mathbb{O}}$. Then $\xi$ is a random measure if and only
if $\xi_A$ is a random variable for each $A\in
\mathcal{D}$. Similarly, $N$ is a point process if and only if $N(A)$
is a random variable for each $A\in \mathcal{D}$.
\end{theorem}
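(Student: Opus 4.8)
The plan is to read the statement off Proposition~\ref{prop:algebra}. That proposition identifies $\mathcal{M}_{\mathbb{O}}$ as the smallest $\sigma$-algebra making all evaluation maps $\Phi_A:\mathbb{M}_{\mathbb{O}}\to[0,\infty]$, $\Phi_A(\mu)=\mu(A)$, measurable as $A$ ranges over a $\pi$-system $\mathcal{D}$ of sets bounded away from $\mathbb{C}$ that generates $\mathcal{S}_{\mathbb{O}}$; equivalently $\mathcal{M}_{\mathbb{O}}=\sigma(\Phi_A:A\in\mathcal{D})$. Granting this, Theorem~\ref{thm:randommeasure} is the standard principle that a map into a space whose $\sigma$-algebra is generated by a family of functions is measurable precisely when each of those functions, composed with the map, is measurable.

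For the ``only if'' direction I would argue as follows. If $\xi:(\Omega,\mathcal{E},P)\to(\mathbb{M}_{\mathbb{O}},\mathcal{M}_{\mathbb{O}})$ is measurable, then for every $A\in\mathcal{D}$ the map $\Phi_A$ is $\mathcal{M}_{\mathbb{O}}/\mathcal{B}([0,\infty])$-measurable by Proposition~\ref{prop:algebra}, hence $\xi_A=\Phi_A\circ\xi$ is $\mathcal{E}/\mathcal{B}([0,\infty])$-measurable, i.e.\ an (extended-real-valued) random variable; the same holds in fact for every $A\in\mathcal{S}_{\mathbb{O}}$ bounded away from $\mathbb{C}$.

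For the ``if'' direction, suppose $\xi_A$ is a random variable for each $A\in\mathcal{D}$ and put $\mathcal{G}=\{B\subset\mathbb{M}_{\mathbb{O}}:\xi^{-1}(B)\in\mathcal{E}\}$. Because preimages commute with complements and countable unions, $\mathcal{G}$ is a $\sigma$-algebra. For $A\in\mathcal{D}$ and $C\in\mathcal{B}([0,\infty])$ one has $\xi^{-1}(\Phi_A^{-1}(C))=(\Phi_A\circ\xi)^{-1}(C)=\xi_A^{-1}(C)\in\mathcal{E}$, so $\Phi_A^{-1}(C)\in\mathcal{G}$; thus every $\Phi_A$ with $A\in\mathcal{D}$ is $\mathcal{G}$-measurable. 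By Proposition~\ref{prop:algebra} the smallest $\sigma$-algebra with this property is $\mathcal{M}_{\mathbb{O}}$, so $\mathcal{M}_{\mathbb{O}}\subset\mathcal{G}$, which is exactly the assertion that $\xi$ is $\mathcal{E}/\mathcal{M}_{\mathbb{O}}$-measurable.

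The point-process statement follows by specialisation. By Theorem~\ref{thm:close}, $\mathbb{N}_{\mathbb{O}}$ is closed in $\mathbb{M}_{\mathbb{O}}$, hence $\mathbb{N}_{\mathbb{O}}\in\mathcal{M}_{\mathbb{O}}$ and $\mathcal{N}_{\mathbb{O}}$ is the trace $\sigma$-algebra $\{B\cap\mathbb{N}_{\mathbb{O}}:B\in\mathcal{M}_{\mathbb{O}}\}$; consequently a map $N:\Omega\to\mathbb{N}_{\mathbb{O}}$ is $\mathcal{N}_{\mathbb{O}}$-measurable iff it is $\mathcal{M}_{\mathbb{O}}$-measurable as a map into $\mathbb{M}_{\mathbb{O}}$, and the two inclusions above go through verbatim with $\xi$ replaced by $N$. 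I do not expect a genuine obstacle here: the substantive work has already been done in Proposition~\ref{prop:algebra}, and the only things to watch are (a) to invoke that proposition with the given $\pi$-system $\mathcal{D}$ of sets bounded away from $\mathbb{C}$ rather than re-proving the generation statement, and (b) to record measurability of $\xi_A$ against $\mathcal{B}([0,\infty])$ since these variables take values in $[0,\infty]$. The ``good-sets'' family $\mathcal{G}$ is automatically a $\sigma$-algebra, so no additional $\pi$--$\lambda$ argument beyond what Proposition~\ref{prop:algebra} already encapsulates is required.
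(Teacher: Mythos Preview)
Your proof is correct and follows essentially the same route as the paper: the paper also introduces the ``good-sets'' $\sigma$-algebra (denoted $\mathcal{U}$ there, your $\mathcal{G}$), observes $\xi^{-1}(\Phi_A^{-1}(B))=(\xi_A)^{-1}(B)$, and then invokes Proposition~\ref{prop:algebra} to get $\mathcal{M}_{\mathbb{O}}\subset\mathcal{U}$. Your treatment is slightly more explicit about the point-process case (via Theorem~\ref{thm:close} and the trace $\sigma$-algebra), whereas the paper simply says the argument is similar.
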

\begin{proof}
Let $\mathcal{U}$ be the $\sigma$-algebra of subsets of
$\mathbb{M}_{\mathbb{O}}$ whose inverse images under $\xi$ are events, and
let $\Phi_A$ denote the mapping taking a measure $\mu\in
\mathbb{M}_{\mathbb{O}}$ into $\mu(A)$. Because
$\xi_A(\omega)=\xi(A,\omega)=\Phi_A(\xi(\cdot,\omega))$, 
\begin{eqnarray*}
  \xi^{-1}(\Phi_A^{-1}(B))=(\xi_A)^{-1}(B)\,, \quad B\in \mathcal{B}(\mathbb{R}^+)\,.
\end{eqnarray*}
If $\xi_A$ is a random variable, $(\xi_A)^{-1}(B)\in \mathcal{E}$ and
we have $\Phi_A^{-1}(B)\in \mathcal{U}$ by
definition. Proposition~\ref{prop:algebra} yields that
$\mathcal{M}_{\mathbb{O}}\subset \mathcal{U}$ and thus $\xi$ is a
random measure. Conversely, if $\xi$ is a 
random measure, $\Phi_A^{-1}(B)\in \mathcal{M}_{\mathbb{O}}$ for
$B\in \mathcal{B}(\mathbb{R}^+)$ and
hence, $\xi^{-1}(\Phi^{-1}_A(B))\in \mathcal{E}$. This shows that
$\xi_A$ is a random variable. 

It is similar to show that $N$ is a point process if and only if
$N(A)$ is a random variable for each $A\in \mathcal{D}$. 
\end{proof}
As a simple application of Proposition~\ref{prop:algebra} and
Theorem~\ref{thm:randommeasure}, it is easy to prove the following
corollary and hence the proof is omitted. 
\begin{corollary}
The sufficient and necessary condition for $\xi$ to be a random
measure is that $\xi(A)$ is a ramdom variable for each $A\in
\mathcal{S}_{\mathbb{O}}$ bounded away from $\mathbb{C}$. Similarly,
the sufficient and necessary condition for $N$ to be a point process
is that $N(A)$ is a random variable for each $A\in
\mathcal{S}_{\mathbb{O}}$ bounded away from $\mathbb{C}$.
\end{corollary}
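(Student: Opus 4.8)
The plan is to obtain the corollary as an immediate consequence of Theorem~\ref{thm:randommeasure}. I will show that the whole family
\[
\mathcal{D}_0=\{A\in\mathcal{S}_{\mathbb{O}}:A\text{ is bounded away from }\mathbb{C}\}
\]
is itself a $\pi$-system of Borel sets bounded away from $\mathbb{C}$ that generates $\mathcal{S}_{\mathbb{O}}$, and then apply Theorem~\ref{thm:randommeasure} with the choice $\mathcal{D}=\mathcal{D}_0$. Since that theorem asserts that $\xi$ is a random measure if and only if $\xi_A$ is a random variable for every $A\in\mathcal{D}$ (and similarly for point processes), for any such admissible $\mathcal{D}$, the choice $\mathcal{D}=\mathcal{D}_0$ yields exactly the two statements of the corollary.

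The verification reduces to two routine checks. First, $\mathcal{D}_0$ is a $\pi$-system: if $A\subset\mathbb{S}\setminus\mathbb{C}^{r_1}$ and $B\subset\mathbb{S}\setminus\mathbb{C}^{r_2}$ with $r_1,r_2>0$, then, writing $r=\max\{r_1,r_2\}$, one has $\mathbb{C}^{r_1}\cup\mathbb{C}^{r_2}=\mathbb{C}^{r}$, hence $A\cap B\subset\mathbb{S}\setminus\mathbb{C}^{r}$, so $A\cap B\in\mathcal{D}_0$. Second, $\sigma(\mathcal{D}_0)=\mathcal{S}_{\mathbb{O}}$: the inclusion $\sigma(\mathcal{D}_0)\subset\mathcal{S}_{\mathbb{O}}$ is clear, while for the reverse inclusion I would use that $\mathbb{C}$ is closed, so $\bigcap_{r>0}\mathbb{C}^r=\{x\in\mathbb{S}:d(x,\mathbb{C})=0\}=\mathbb{C}$ and therefore $\mathbb{S}\setminus\mathbb{C}^{1/j}\uparrow\mathbb{O}$ as $j\to\infty$; consequently any $A\in\mathcal{S}_{\mathbb{O}}$ can be written as the countable union $A=\bigcup_{j\ge1}\bigl(A\cap(\mathbb{S}\setminus\mathbb{C}^{1/j})\bigr)$ of members of $\mathcal{D}_0$, so $A\in\sigma(\mathcal{D}_0)$.

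With these two facts in hand, Theorem~\ref{thm:randommeasure} applied to $\mathcal{D}=\mathcal{D}_0$ gives the random-measure assertion verbatim; the necessity direction can alternatively be read off from Proposition~\ref{prop:algebra}, which already records that $\Phi_A$ is $\mathcal{M}_{\mathbb{O}}$-measurable for every $A\in\mathcal{D}_0$, so that $\xi_A=\Phi_A\circ\xi$ is a random variable whenever $\xi$ is a random measure. The point-process half is identical, with $(\mathbb{M}_{\mathbb{O}},\mathcal{M}_{\mathbb{O}})$ replaced by $(\mathbb{N}_{\mathbb{O}},\mathcal{N}_{\mathbb{O}})$ and using that $\mathbb{N}_{\mathbb{O}}$ is a closed---hence measurable---subset of $\mathbb{M}_{\mathbb{O}}$ by Theorem~\ref{thm:close}, so that $\mathcal{N}_{\mathbb{O}}$ is the trace of $\mathcal{M}_{\mathbb{O}}$. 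The only step that is not purely formal is the exhaustion $\mathbb{S}\setminus\mathbb{C}^{1/j}\uparrow\mathbb{O}$ underlying $\sigma(\mathcal{D}_0)=\mathcal{S}_{\mathbb{O}}$, and even that is elementary since $\mathbb{C}$ is closed; everything else is a direct invocation of the two preceding results, which is why the proof may safely be omitted.
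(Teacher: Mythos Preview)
Your proposal is correct and follows exactly the approach the paper intends: the paper omits the proof, noting only that it is ``a simple application of Proposition~\ref{prop:algebra} and Theorem~\ref{thm:randommeasure},'' and you have supplied precisely those details---verifying that the collection $\mathcal{D}_0$ of Borel sets bounded away from $\mathbb{C}$ is a $\pi$-system generating $\mathcal{S}_{\mathbb{O}}$ (facts already established within the proof of Proposition~\ref{prop:algebra}) and then invoking Theorem~\ref{thm:randommeasure} with $\mathcal{D}=\mathcal{D}_0$.
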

\subsection{Finite-dimensional distributions}
The finite-dimensional, or \emph{fidi} for short, distributions of a
random measure $\xi$ are the joint distributions for all finite
families of the random variables $\xi(A_1), \ldots, \xi(A_k)$, where
$A_1, \ldots, A_k$ are Borel sets bounded away from $\mathbb{C}$, that is,
the family of proper distribution functions
\begin{eqnarray}\label{eq:fidi}
  F_k(A_1,\ldots,A_k;x_1,\ldots,x_k)=P(\xi(A_i)\le x_i,
  i=1,\ldots,k)\,.
\end{eqnarray}
\begin{theorem}\label{thm:fidi}
The distribution of a random measure is completely determined by the
fidi distributions \eqref{eq:fidi} for all finite families $\{
A_1,\ldots,A_k\}$ of disjoint sets from a $\pi$-system of Borel sets
bounded away from $\mathbb{C}$ generating $\mathcal{S}_{\mathbb{O}}$.  
\end{theorem}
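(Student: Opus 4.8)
The plan is to identify the law of a random measure $\xi$ with the probability measure $\law_\xi:=\Pr\circ\xi^{-1}$ on $(\mathbb{M}_{\mathbb{O}},\mathcal{M}_{\mathbb{O}})$ and to show that two random measures with the same fidi distributions over all finite disjoint families from the $\pi$-system $\mathcal D$ induce the same $\law_\xi$; the statement follows. The backbone is Proposition~\ref{prop:algebra}: it says $\mathcal{M}_{\mathbb{O}}=\sigma(\Phi_A:A\in\mathcal D)$, where $\Phi_A(\mu)=\mu(A)$. Consequently the finite-dimensional cylinders
\[
H(A_1,\dots,A_k;t_1,\dots,t_k)=\{\mu\in\mathbb{M}_{\mathbb{O}}:\mu(A_i)\le t_i,\ i=1,\dots,k\},\qquad A_i\in\mathcal D,\ t_i\in[0,\infty),
\]
generate $\mathcal{M}_{\mathbb{O}}$ (finite intersections do not enlarge the generated $\sigma$-algebra), and they form a $\pi$-system, since intersecting two such sets just concatenates the lists of constraints. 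Both $\law_\xi$ are probability measures, so by Dynkin's $\pi$--$\lambda$ theorem $\law_\xi$ is uniquely determined once one knows $\law_\xi\bigl(H(A_1,\dots,A_k;t_1,\dots,t_k)\bigr)$ for every finite family $A_1,\dots,A_k\in\mathcal D$, that is, once one knows the joint law of $(\xi(A_1),\dots,\xi(A_k))$ for every finite — not necessarily disjoint — family from $\mathcal D$.

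The remaining, and main, step is to recover these joint laws from the fidi distributions over \emph{disjoint} families only. Fix $A_1,\dots,A_k\in\mathcal D$. Since $\mathcal D$ is a $\pi$-system, every intersection $A_I:=\bigcap_{i\in I}A_i$ ($\emptyset\neq I\subseteq\{1,\dots,k\}$) again lies in $\mathcal D$. The finite algebra generated by $A_1,\dots,A_k$ has finitely many atoms $E_1,\dots,E_N$; they are pairwise disjoint, contained in $\bigcup_i A_i$, hence bounded away from $\mathbb C$, so by the corollary following Theorem~\ref{thm:randommeasure} each $\xi(E_j)$ is a genuine random variable. Each $A_i$ is a disjoint union of atoms, so finite additivity gives $\xi(A_i)=\sum_{j:\,E_j\subseteq A_i}\xi(E_j)$; conversely, each atom $E_j$ is obtained from the sets $A_I\in\mathcal D$ by finitely many unions and proper differences, and $\xi(D\setminus D')=\xi(D)-\xi(D')$ whenever $D'\subseteq D$ with $\{D\setminus D',D'\}$ disjoint. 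Chaining these identities together with inclusion--exclusion, the joint law of $(\xi(A_1),\dots,\xi(A_k))$ becomes a measurable image of — and conversely determines — joint laws of $\xi$ over finite \emph{disjoint} subfamilies built from $\mathcal D$, i.e.\ the given fidi distributions. (A clean way to organise this bookkeeping is to pass first to the ring generated by $\mathcal D$, still a generating collection of bounded-away Borel sets, on which the disjointification is immediate, and then to run the cylinder/Dynkin argument there.)

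I expect the disjointification of the second paragraph to be the only delicate point: one must ensure that every joint law invoked along the way is genuinely a joint law of $\xi$ over a family that is simultaneously disjoint and expressible through members of $\mathcal D$ (or of the ring it generates), keeping in mind that a bare $\pi$-system need not be closed under set differences. Everything else is routine: the measurability of $\Phi_A$ supplied by Proposition~\ref{prop:algebra}, the $\pi$-system property of the cylinders, and the application of Dynkin's theorem. Once the joint law of $(\xi(A_1),\dots,\xi(A_k))$ is pinned down for every finite family from $\mathcal D$, the $\pi$--$\lambda$ step closes the argument and shows that $\law_\xi$ — hence the distribution of the random measure $\xi$ — is completely determined by the fidi distributions \eqref{eq:fidi} over disjoint families.
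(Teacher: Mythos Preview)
Your core argument---identify the law of $\xi$ with a probability measure on $(\mathbb{M}_{\mathbb{O}},\mathcal{M}_{\mathbb{O}})$, use Proposition~\ref{prop:algebra} to see that the cylinder sets over $\mathcal{D}$ form a generating $\pi$-system, and apply Dynkin's theorem---is exactly the paper's route. The paper's own proof is in fact a two-sentence sketch: it invokes the $\pi$-system uniqueness lemma for probability measures together with Proposition~\ref{prop:algebra}, and stops. In particular it does \emph{not} carry out the reduction from arbitrary to disjoint families; your second paragraph already goes beyond what the paper writes.

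Your worry about that reduction is well placed, and the workaround you sketch does not close it. Passing to the ring $\mathcal{R}$ generated by $\mathcal{D}$ gives a collection on which disjointification is automatic, but the hypothesis supplies fidi distributions only over disjoint families from $\mathcal{D}$, not from $\mathcal{R}$; the atoms $E_j$ of the algebra generated by $A_1,\dots,A_k$ involve set differences and need not lie in $\mathcal{D}$, so their joint law is not part of the given data. Inclusion--exclusion does express each $\xi(E_J)$ as an alternating sum of the $\xi(A_I)$ with $A_I=\bigcap_{i\in I}A_i\in\mathcal{D}$, but to exploit that you would need the joint law of the \emph{non-disjoint} family $(A_I)_I$---which is precisely what you set out to recover, so the argument is circular as stated. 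The paper is silent on this step; for a bare $\pi$-system it is genuinely delicate, and both arguments become complete only once one either takes $\mathcal{D}$ closed under the relevant differences (for instance the full collection of Borel sets bounded away from $\mathbb{C}$, which is how the theorem is actually used downstream) or supplies a separate justification.
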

Note that for a $\pi$-system $\mathcal{A}$, if two probability
measures $P_1$ and $P_2$ agree on $\mathcal{A}$, then $P_1$ and $P_2$
agree on $\sigma(\mathcal{A})$; see Theorem 3.3,
\cite{BIL95}. The Borel sets bounded away from
$\mathbb{C}$ forms a $\pi$-system and hence Theorem~\ref{thm:fidi}
holds by Proposition~\ref{prop:algebra}. The proof is completed. 

Similarly, the fidi distributions of the point process $N$ are the
joint distributions for Borel sets
bounded away from $\mathbb{C}$, $\{A_1,\ldots,A_2\}$ 
and nonnegative integers $n_1,n_2,\ldots$, is defined by
\begin{eqnarray*}
  P_k(A_1,\ldots,A_k;n_1,\ldots,n_k)=P(N(A_i)=n_i\,, i=1,\ldots,k)\,.
\end{eqnarray*}
According to Theorem~\ref{thm:fidi}, the fidi distributions determine
$N$.  

\subsection{Laplace functionals}
\label{sec:laplace-functionals}
Let $BM(\mathbb{S})$ be a class of non-negative bounded measurable
functions $f$ for which there exists $r>0$ such that $f$ vanishes on
$\mathbb{C}^r$. Let
$\xi:(\Omega,\mathcal{E}, P)\to (\mathbb{M}_{\mathbb{O}},
\mathcal{M}_{\mathbb{O}})$ be a random measure and $N$ be a point
process. The Laplace functional is defined for a random measure $\xi$
and each $f\in BM(S)$ by
\begin{equation}
\label{eq:laplace-rm}
  L_{\xi}[f]=E[\exp(-\xi(f)]=\int_{\mathbb{M}_{\mathbb{O}}}\exp\Big(-\int_{\mathbb{O}} f(x)\,\xi(dx)\Big)\,P(d\xi)\,,
\end{equation}
where $\xi_f=\int_{\mathbb{O}}f(x)\,\xi(dx)$. Similarly,
we can define $L_N[f]$ for the point process $N$ and each $f\in BM(\mathbb{S})$. 
\begin{theorem}\label{thm:laplace}
The Laplace functions $\{L_{\xi}[f]: f\in BM(\mathbb{S})\}$ uniquely determine
the distribution of a random measure $\xi$. Similarly, the Laplace
functions $\{L_N[f]: f\in BM(\mathbb{S})\}$ uniquely determine the distribution
of a point process $N$.  
\end{theorem}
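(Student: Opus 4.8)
The plan is to reduce the statement about Laplace functionals to Theorem~\ref{thm:fidi}, which already tells us that the law of a random measure is determined by its fidi distributions over a suitable $\pi$-system of sets bounded away from $\mathbb{C}$. Concretely, I would first observe that for any finite collection of disjoint Borel sets $A_1,\ldots,A_k$ bounded away from $\mathbb{C}$ and any nonnegative scalars $t_1,\ldots,t_k$, the function $f=\sum_{j=1}^k t_j \mathbf{1}_{A_j}$ lies in $BM(\mathbb{S})$: each $A_j$ is contained in $\mathbb{S}\setminus\mathbb{C}^{r_j}$ for some $r_j>0$, so $f$ vanishes on $\mathbb{C}^{r}$ with $r=\min_j r_j$, and $f$ is bounded and measurable. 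For such $f$ we have $\xi(f)=\sum_{j=1}^k t_j \xi(A_j)$, hence
\begin{equation*}
  L_\xi[f] = E\Big[\exp\Big(-\sum_{j=1}^k t_j\,\xi(A_j)\Big)\Big],
\end{equation*}
which is exactly the multivariate Laplace transform of the random vector $(\xi(A_1),\ldots,\xi(A_k))$ evaluated at $(t_1,\ldots,t_k)\in[0,\infty)^k$.

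The second step is the uniqueness input: the Laplace transform of a (sub-)probability distribution on $[0,\infty]^k$, viewed as a function on $[0,\infty)^k$, determines that distribution. This is a standard fact; a clean way to invoke it is to note that $s\mapsto e^{-s}$ separates points of $[0,\infty]$ and that finite products $\prod_j e^{-t_j x_j}$ with $t_j\geq 0$ form a multiplicative class generating $\mathcal{B}([0,\infty]^k)$, so two laws with the same Laplace transform agree on a $\pi$-system generating the Borel $\sigma$-algebra and therefore coincide. Consequently, knowing $L_\xi[f]$ for all $f$ of the simple form above recovers every fidi distribution $F_k(A_1,\ldots,A_k;\cdot)$ for disjoint $A_j$ in the chosen $\pi$-system.

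The third step simply chains these together: by Theorem~\ref{thm:fidi} the fidi distributions over disjoint sets from a $\pi$-system of sets bounded away from $\mathbb{C}$ that generates $\mathcal{S}_{\mathbb{O}}$ already determine the law of $\xi$ on $(\mathbb{M}_{\mathbb{O}},\mathcal{M}_{\mathbb{O}})$; since Step~1--2 show these fidi distributions are themselves determined by $\{L_\xi[f]:f\in BM(\mathbb{S})\}$, the Laplace functional determines the law of $\xi$. The point process case is identical, restricting to integer-valued $\xi(A_j)=N(A_j)$; one may alternatively use probability generating functionals, but the Laplace argument goes through verbatim since $\mathbb{N}_{\mathbb{O}}\subset\mathbb{M}_{\mathbb{O}}$ and Theorem~\ref{thm:fidi} applies to $N$ as noted after its statement.

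The only genuinely delicate point is a measure-theoretic bookkeeping issue rather than a conceptual one: the sets $A_j$ must be taken \emph{disjoint} and drawn from the generating $\pi$-system so that Theorem~\ref{thm:fidi} applies, and one must make sure the simple functions built from them still lie in $BM(\mathbb{S})$ uniformly (handled by taking the minimum of finitely many radii $r_j$). A secondary subtlety is that $\xi(A_j)$ may take the value $+\infty$ a priori, so the Laplace transform should be treated as a map on $[0,\infty)^k$ with values in $[0,1]$ and the uniqueness theorem applied on the compactified space $[0,\infty]^k$; this is routine. I expect no real obstacle beyond stating these conventions carefully.
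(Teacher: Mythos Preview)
Your proposal is correct and follows essentially the same route as the paper: plug in simple functions $f=\sum_j t_j\mathbf{1}_{A_j}$ with $A_j$ bounded away from $\mathbb{C}$, recognise $L_\xi[f]$ as the joint Laplace transform of $(\xi(A_1),\ldots,\xi(A_k))$, invoke uniqueness of the Laplace transform, and conclude via Theorem~\ref{thm:fidi}. One minor remark: your caveat about $\xi(A_j)$ possibly equalling $+\infty$ is unnecessary here, since each $A_j$ is bounded away from $\mathbb{C}$ and hence $\xi(A_j)\le\xi(\mathbb{S}\setminus\mathbb{C}^r)<\infty$ by the very definition of $\mathbb{M}_{\mathbb{O}}$.
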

\begin{proof}
 For $k\ge 1$ and Borel sets $A_1,\ldots,A_k\in \mathcal{S}_{\mathbb{O}}$
 bounded away from $\mathbb{C}$ and $\lambda_i>0$, $i=1\ldots, k$, the function
 $f:\mathbb{O}\to [0,\infty)$ is given by
\begin{equation*}
f(x)= \sum_{i=1}^k\lambda_i\mathbf{1}_{A_i}(x)\,, \quad x\in \mathbb{O}\,,
\end{equation*}
where $\mathbf{1}_A$ is the indicator function for $A\in
\mathcal{S}_{\mathbb{O}}$. Then for each realization $\omega\in \Omega$, 
\begin{eqnarray*}
  \xi(\omega,f)= \int_{\mathbb{S}}f(x)\xi(\omega,dx)=\sum_{i=1}^k\lambda_i\xi(\omega,A_i)\,,
\end{eqnarray*}
and 
\begin{eqnarray*}
 L_{\xi}[f]= E\exp \Big(-\sum_{i=1}^k\lambda_i\xi(A_i) \Big)\,,
\end{eqnarray*}
which is the joint Laplace transform of the random vector
$(\xi(A_i))_{i=1,\ldots,k}$. The uniqueness of theorem for Laplace
transform for random vectors yields that $L_{\xi}$ uniquely determines
the law of $(\xi(A_i))_{i=1,\ldots,k}$ and
Theorem~\ref{thm:fidi} completes the proof.
\end{proof}
The following proposition shows the convergence of Laplace
functionals. 
\begin{proposition}\label{prop:lp-convergence}
  Let $\xi$ be a random measure. For a sequence of functions
  $(f_n)_{n\in \mathbb{N}}$ with $f_n\in BM(\mathbb{S})$ and a
  function $f\in BM(\mathbb{S})$, the convergence $L_{\xi}[f_n]\to
  L_{\xi}[f]$ as $\sup_{x\in \mathbb{O}}|f_n(x)-f(x)|\to
  0$ if one of three conditions holds: (i) $\xi(\mathbb{O})<\infty$;
  (ii) the pointwise convergence $f_n\to f$ is monotonic;
(iii) there exists $r>0$ such that for each $n\ge 1$, $f_n$ vanishes
on $\mathbb{C}^r$.  
\end{proposition}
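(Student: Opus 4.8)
The plan is to prove the convergence $L_{\xi}[f_n]\to L_{\xi}[f]$ by bounding the difference $|L_{\xi}[f_n]-L_{\xi}[f]|$ pathwise and then applying dominated convergence on the probability space $(\Omega,\mathcal{E},P)$. Since $|e^{-a}-e^{-b}|\leq |a-b|$ for $a,b\geq 0$, I would first note that $|e^{-\xi(f_n)}-e^{-\xi(f)}|\leq |\xi(f_n)-\xi(f)|$ almost surely, and by Jensen's inequality / the triangle inequality $|\xi(f_n)-\xi(f)|\leq \xi(|f_n-f|)$. Thus $|L_{\xi}[f_n]-L_{\xi}[f]|\leq E[\xi(|f_n-f|)]$, and it suffices to show $\xi(|f_n-f|)\to 0$ in $L^1(P)$, or, via a dominated-convergence argument, first $P$-almost surely and then with a uniform integrable bound. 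Write $\eps_n=\sup_{x\in\mathbb{O}}|f_n(x)-f(x)|\to 0$.

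For the three cases I would argue as follows. In case (iii), there is a fixed $r>0$ with $f_n\equiv 0$ on $\mathbb{C}^r$ for all $n$; since also $f\in BM(\mathbb{S})$ we may (enlarging $r$ if necessary, using that $f$ vanishes on some $\mathbb{C}^{r'}$) assume $f$ vanishes on $\mathbb{C}^r$ too, so $|f_n-f|$ is supported in $\mathbb{S}\setminus\mathbb{C}^r$ and bounded by $\eps_n$. Hence $\xi(|f_n-f|)\leq \eps_n\,\xi(\mathbb{S}\setminus\mathbb{C}^r)$, and since $\xi\in\mathbb{M}_{\mathbb{O}}$ the quantity $\xi(\mathbb{S}\setminus\mathbb{C}^r)$ is finite $P$-a.s.; this already gives $\xi(|f_n-f|)\to 0$ a.s. To upgrade to convergence of expectations one repeats the estimate \emph{inside} the exponential: $|L_\xi[f_n]-L_\xi[f]|\le E[|e^{-\xi(f_n)}-e^{-\xi(f)}|]$ and the integrand is bounded by $1$ and tends to $0$ a.s., so bounded convergence finishes case (iii). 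Case (i) is the same estimate with $r$ replaced by $0$: $\xi(|f_n-f|)\leq \eps_n\,\xi(\mathbb{O})<\infty$ a.s., then bounded convergence on the exponentials. In case (ii), monotone pointwise convergence $f_n\uparrow f$ or $f_n\downarrow f$ gives $\xi(f_n)\to\xi(f)$ $P$-a.s. by the monotone convergence theorem applied to the measure $\xi(\cdot,\omega)$ for each fixed $\omega$ (in the decreasing case one uses that $\xi(f_1)<\infty$ for the integrable majorant, which holds because $f_1\in BM(\mathbb{S})$ and, once the supremum distance is small, $f_1$ is dominated by $f+\eps_1$ supported off some $\mathbb{C}^{r}$... more simply, $f_1$ vanishes on some $\mathbb{C}^{r_1}$ so $\xi(f_1)\le \|f_1\|_\infty\xi(\mathbb{S}\setminus\mathbb{C}^{r_1})<\infty$ a.s.); then $e^{-\xi(f_n)}\to e^{-\xi(f)}$ a.s. and bounded convergence gives $L_\xi[f_n]\to L_\xi[f]$.

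The main obstacle is the subtlety that, without one of the three hypotheses, $f$ and $f_n$ might vanish on $\mathbb{C}^{r_n}$ with $r_n\downarrow 0$, so that $|f_n-f|$, although uniformly small, is spread over larger and larger sets $\mathbb{S}\setminus\mathbb{C}^{r_n}$ whose $\xi$-mass blows up; this is exactly why some control — global finiteness of $\xi$, monotonicity, or a common vanishing radius — is needed, and the proof should make clear that in each case this obstruction is removed. I would therefore organize the writeup as: (1) the elementary inequality $|L_\xi[f_n]-L_\xi[f]|\le E[\,1\wedge\xi(|f_n-f|)\,]$ together with the pathwise bound $\xi(|f_n-f|)\le \eps_n\,\xi(\mathbb{S}\setminus\mathbb{C}^{r})$ valid on the relevant support; (2) a short lemma-free remark that in each of (i)--(iii) one obtains $\xi(|f_n-f|)\to 0$ $P$-a.s.; (3) conclude by the bounded convergence theorem since the integrand $1\wedge\xi(|f_n-f|)$ is dominated by the constant $1$. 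The only genuinely careful point is handling the decreasing monotone case in (ii), where one must exhibit a $P$-a.s.\ finite majorant before invoking dominated convergence for the measure $\xi(\cdot,\omega)$; this is supplied by $\xi(f_1)\le \|f_1\|_\infty\,\xi(\mathbb{S}\setminus\mathbb{C}^{r_1})<\infty$ a.s., using $f_1\in BM(\mathbb{S})$.
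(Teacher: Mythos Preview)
Your proposal is correct and follows essentially the same route as the paper: bound $|e^{-\xi(f_n)}-e^{-\xi(f)}|$ via the Lipschitz estimate $|e^{-a}-e^{-b}|\le|a-b|$, control $\xi(|f_n-f|)$ by $\eps_n$ times a finite $\xi$-mass in cases (i) and (iii), and use monotone convergence in case (ii). Two small remarks. First, you are actually more careful than the paper, which writes the bounds $|L_\xi[f_n]-L_\xi[f]|\le \xi(\mathbb{O})\eps_n$ and $|L_\xi[f_n]-L_\xi[f]|\le \xi(\mathbb{S}\setminus\mathbb{C}^r)\eps_n$ as if the right-hand sides were deterministic; your explicit appeal to bounded convergence on the probability space (using $|e^{-\xi(f_n)}-e^{-\xi(f)}|\le 1$) is the rigorous way to pass from the pathwise bound to the conclusion, and it avoids any implicit assumption that $E[\xi(\mathbb{O})]$ or $E[\xi(\mathbb{S}\setminus\mathbb{C}^r)]$ is finite. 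Second, for the decreasing branch of (ii) the paper takes a slightly cleaner shortcut: since $0\le f_n\le f_1$ and $f_1$ vanishes on some $\mathbb{C}^{r_1}$, every $f_n$ vanishes on that same $\mathbb{C}^{r_1}$, so one is back in case (iii) immediately. Your dominated-convergence argument with majorant $\xi(f_1)\le\|f_1\|_\infty\,\xi(\mathbb{S}\setminus\mathbb{C}^{r_1})$ is equally valid but a touch more roundabout.
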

\begin{proof}
If condition (i) holds, 
\begin{eqnarray*}
  |L_{\xi}[f_n]-L_{\xi}[f]|\le \xi(\mathbb{O})\sup_{x\in \mathbb{O}}|f_n(x)-f(x)|\to 0\,.
\end{eqnarray*}  
If condition (iii) holds, since $\xi(\mathbb{S}\setminus \mathbb{C}^r)<\infty$, 
\begin{eqnarray*}
|L_{\xi}[f_n]-L_{\xi}[f]|\le \xi(\mathbb{S}\setminus \mathbb{C}^r)\sup_{x\in \mathbb{O}}|f_n(x)-f(x)|\to 0\,.
\end{eqnarray*}
Suppose that condition (ii) holds. If $f_n(x)\downarrow f(x)$ for each
$x\in \mathbb{O}$, this implies that there exists $r>0$ that
$f_n$ vanishes on $\mathbb{C}^r$ because $f_1\in BM(\mathbb{S})$ and condition (iii)
is satisfied. If $f_n(x)\uparrow f(x)$ for each $x\in
\mathbb{O}$, dominated convergence theorem ensures that $L_{\xi}[f_n]\to L_{\xi}[f]$. 
\end{proof}
\subsection{Weak convergence of random measures}
\label{sec:weak-conv-rand}
Weak convergence is characterized by weak convergence of fidi
distributions. In connection of 
this idea, we are interested in the \emph{stochastic continuity sets},
that is, Borel sets $A\in \mathcal{S}_{\mathbb{O}}$ bounded away from
$\mathbb{C}$ satisfying the condition that $P(\xi(\partial A)>0)=0$ for a probability measure $P$. Let $S_P$ be the collection of such sets. It is trivial to show
that for $A,B\in S_P$, $A\cup B\in S_P$ and $A\cap B\in S_P$. Hence,
$S_P$ is a $\pi$-system. The following lemma implies that $S_P$
generates $\mathcal{S}_{\mathbb{O}}$.
\begin{lemma}\label{lem:fidifinite}
  Let $P$ be a probability measure on $\mathcal{M}_{\mathbb{O}}$ and
  $S_P$ be the class of stochastic continuity sets for
  $P$. Then for all but a countable set of values of $r>0$,
  $\mathbb{C}^r\in S_P$ and given $x\in \mathbb{O}$, for all but a
  countable set of values of $r\in (0,d(x,\mathbb{C}))$, $B_{r}(x)\in
  S_P$. 
\end{lemma}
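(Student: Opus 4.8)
The plan is to reduce everything to a standard fact about finite measures: a finite Borel measure $\nu$ on a metric space can charge the boundary $\partial A$ of only countably many members of a nested family $\{A_r\}$ whose boundaries are pairwise disjoint. Since $P$ itself is a probability measure on $\mathcal{M}_{\mathbb{O}}$ rather than a deterministic measure on $\mathbb{S}$, I would first pass to the intensity-type set function $A \mapsto \E[\,1\wedge\xi(A)\,]$, or more simply work with $\lambda(A) := \E[\arctan \xi(A)]$ (any bounded, strictly increasing function of $\xi(A)$ composed with expectation will do), and observe that $P(\xi(\partial A)>0)=0$ if and only if $\lambda(\partial A)=0$. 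The measurability of $\xi(\partial A)$ for the relevant sets is guaranteed by Proposition~\ref{prop:algebra}, since the sets in question ($\mathbb{C}^r$ and $B_r(x)$) are bounded away from $\mathbb{C}$ once $r$ is bounded below, so the expectations are well defined.

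First I would handle the balls $B_r(x)$ for a fixed $x\in\mathbb{O}$. For $0<r<d(x,\mathbb{C})$ the closed ball $\overline{B_r(x)}$ is bounded away from $\mathbb{C}$, hence $\xi(\overline{B_r(x)})$ is a finite random variable and $\lambda(\overline{B_r(x)})<\infty$; in fact on any compact-away-from-$\mathbb{C}$ region $\xi$ is a.s.\ finite and $\lambda$ is a genuine finite measure there. The boundaries $\partial B_r(x) \subset \{y : d(x,y)=r\}$ are pairwise disjoint for distinct $r$, so the function $r\mapsto \lambda(\overline{B_r(x)})$ is non-decreasing and bounded on any interval $[\eps, d(x,\mathbb{C})-\eps]$, hence has at most countably many discontinuities there; a discontinuity at $r$ is exactly the statement $\lambda(\partial B_r(x))>0$. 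Taking a union over $\eps = 1/m$ gives at most countably many bad $r$ in $(0,d(x,\mathbb{C}))$, which is the second claim.

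For the sets $\mathbb{C}^r = \{x : d(x,\mathbb{C})<r\}$ I would argue the same way using the distance function $g(x) = d(x,\mathbb{C})$, which is continuous, so $\mathbb{C}^r = g^{-1}([0,r))$ is open and $\partial\mathbb{C}^r \subset \{x : g(x)=r\}$; these level sets are pairwise disjoint across distinct $r$. For $r<R$ the set $\mathbb{C}^R\setminus\mathbb{C}^r$ is bounded away from $\mathbb{C}$ and carries finite $\lambda$-mass, so $r\mapsto \lambda(\mathbb{C}^R\setminus\mathbb{C}^{r})$ is monotone and bounded on $(0,R)$ for each fixed $R$, whence it has countably many jumps; a jump at $r$ means $\lambda(\partial\mathbb{C}^r)>0$, i.e.\ $\mathbb{C}^r\notin S_P$. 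Letting $R$ run through the integers and taking the union of the (countable) exceptional sets yields at most countably many bad $r\in(0,\infty)$.

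The only real subtlety — and the step I would be most careful about — is bookkeeping the fact that $P$ is a law on $\mathbb{M}_{\mathbb{O}}$: I must check that $\lambda$ restricted to subsets of a fixed region $\mathbb{S}\setminus\mathbb{C}^\eps$ is countably additive and finite, which follows from Fubini/monotone convergence applied to $\xi(\cdot)$ together with the defining property that every $\mu\in\mathbb{M}_{\mathbb{O}}$ is finite on $\mathbb{S}\setminus\mathbb{C}^\eps$ (so $\xi(\mathbb{S}\setminus\mathbb{C}^\eps)<\infty$ $P$-a.s.\ and $\arctan$ keeps things integrable), and that $\lambda(\partial A)=0 \iff \xi(\partial A)=0$ a.s.\ $\iff P(\xi(\partial A)>0)=0$. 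Once that identification is in place the argument is exactly the classical "monotone function has countably many jumps" observation applied twice, and combining the two parts gives the lemma. Since $S_P$ was already shown to be a $\pi$-system and these balls and annuli generate $\mathcal{S}_{\mathbb{O}}$, this also delivers the asserted consequence that $S_P$ generates $\mathcal{S}_{\mathbb{O}}$.
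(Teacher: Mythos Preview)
Your approach is sound and genuinely different from the paper's. The paper argues by contradiction: fixing $\varepsilon_1,\varepsilon_2,r_0>0$, it supposes there are infinitely many $r_i>r_0$ with $P(\xi(\partial\mathbb{C}^{r_i})>\varepsilon_1)>\varepsilon_2$, applies reverse Fatou to the events $A_i=\{\xi(\partial\mathbb{C}^{r_i})>\varepsilon_1\}$, and obtains $P(\xi(\mathbb{S}\setminus\mathbb{C}^{r_0})=\infty)\ge\varepsilon_2>0$, contradicting finiteness of every $\xi\in\mathbb{M}_{\mathbb{O}}$ on $\mathbb{S}\setminus\mathbb{C}^{r_0}$; letting $\varepsilon_1,\varepsilon_2,r_0$ range over a countable set then yields the countable exceptional set. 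You instead pass to an auxiliary set function $\lambda$ and invoke the classical fact that a bounded monotone real function has countably many jumps. Your route is more constructive and sidesteps the double-$\varepsilon$ bookkeeping; the paper's is a little shorter and introduces no auxiliary object.

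One correction: the function $\lambda(A)=\E[\arctan\xi(A)]$ (and likewise $\E[1\wedge\xi(A)]$) is \emph{not} countably additive, since $\arctan$ is not additive, so the check you flag in your last paragraph would fail as stated. Fortunately you never actually need $\lambda$ to be a measure. The jump argument uses only that $\lambda$ is monotone under inclusion and that, for $A\subset B$ with $B$ bounded away from $\mathbb{C}$, one has $\lambda(A)=\lambda(B)$ iff $\xi(B\setminus A)=0$ a.s.; both hold because $\arctan$ is strictly increasing. With these, continuity of $r\mapsto\lambda(\overline{B_r(x)})$ at $r$ forces $\xi(\{y:d(x,y)=r\})=0$ a.s., and since $\partial B_r(x)\subset\{y:d(x,y)=r\}$ this gives $B_r(x)\in S_P$; the $\mathbb{C}^r$ case is identical with the level sets $\{y:d(y,\mathbb{C})=r\}$. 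If you prefer to work with an honest measure, replace $\lambda$ by $\lambda_\varepsilon(A)=\E\bigl[\xi(A)/(1+\xi(\mathbb{S}\setminus\mathbb{C}^\varepsilon))\bigr]$ on $\mathcal{B}(\mathbb{S}\setminus\mathbb{C}^\varepsilon)$, which is genuinely countably additive and finite, and run the same argument.
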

\begin{proof}
  It is sufficient to show that for each finite positive
  $\varepsilon_1$, $\varepsilon_2$ and $r_0$, there are only finite
  numbers of $r>r_0$ satisfying 
\begin{equation}
\label{eq:fidifinite1}
P\Big(\xi\Big(\partial(\mathbb{C}^r)>\varepsilon_1 \Big)\Big)>\varepsilon_2\,.
\end{equation}
Suppose the contrary and there exists positive numbers $\varepsilon_1$,
$\varepsilon_2$ and $r_0$ such that there is a countably infinite set $\{r_i,
i\ge 1\}$ with $r_i >r_0$ satisfying 
\[P\big(A_i \big)>\varepsilon_2\,, \quad i\ge 1\,,\]
where $A_i=\{\xi\in \mathbb{M}_{\mathbb{O}}:\xi(\partial
\mathbb{C}^{r_i})>\varepsilon_1\}$. Therefore,
\begin{align*}
\varepsilon_{2}\le \limsup_{i\to \infty}P(A_i)\le P(\limsup_{i\to
  \infty}A_i)\le P(\xi(\mathbb{S}\setminus \mathbb{C}^{r_0})=\infty).
\end{align*}
This contradicts to the assumption that for any $\xi\in
\mathbb{M}_{\mathbb{O}}$ and $r>0$, $\xi(\mathbb{S}\setminus
\mathbb{C}^r)<\infty$. Thus, \eqref{eq:fidifinite1} holds for a finite
number of $r>r_0$. 

Applying a similar arguments, we can easily show that given
$x\in \mathbb{O}$, for all but a countable set of values of $r\in
(0,d(x,\mathbb{C}))$, $B_{r}(x)\in S_P$.
\end{proof}

We say that a sequence of random measures $(\xi_n)_{n\ge 1}$ converges
in the sense of fidi distributions if for every finite family
$\{A_1,\ldots,A_k\}$ with $A_i\in S_P$, the joint distributions of
$(\xi_n(A_1),\ldots,\xi_n(A_k))$ converge weakly in
$\mathcal{B}(\mathbb{R}^k)$ to the joint distribution of
$(\xi(A_1),\ldots,\xi(A_k))$. Let $(P_n)_{n\ge 1}$ be a sequence of
probability measures and assume that $P_n\to P$ weakly, or
$P_n\overset{w}{\to} P$. Let $A$ be a
stochastic continuity set for $P$ and denote the mapping
$\Phi_A:\xi\mapsto \xi(A)$. According the definition of the stochastic
continuity set, $P(D)=0$ where $D$ is the set of discontinuity points
for $\Phi_A$. By continuous mapping theorem (see Proposition A2.3.V in
\cite{Daley:2003tf}),  
\begin{align*}
P_n(\Phi_A^{-1})\to P(\Phi_A^{-1})\,.
\end{align*}
Similarly, for any finite family $\{A_1,\ldots,A_k\}$ with $A_i\in
S_P$, let $\Phi_k:\xi\mapsto (\xi(A_1),\ldots,\xi(A_k))$ and
$P_n(\Phi_k^{-1})\to P(\Phi_k^{-1})$ if $P_n\overset{w}{\to} P$. This
leads to the following lemma. 
\begin{lemma}\label{lem:wc}
Weak convergence implies weak convergence of the finite-dimensional
convergence. 
\end{lemma}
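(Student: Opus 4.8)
The plan is to realise each finite-dimensional distribution of $\xi_n$ (resp.\ $\xi$) as the image of $P_n$ (resp.\ $P$) under a single measurable map $\mathbb{M}_{\mathbb{O}}\to\mathbb{R}^k$, and then to invoke the continuous mapping theorem for weak convergence, exactly along the lines of the discussion preceding the statement; the real content lies in verifying the hypotheses of that theorem.

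First I would fix $k\ge 1$ and a finite family $\{A_1,\ldots,A_k\}$ with each $A_i\in S_P$, and define $\Phi_k:\mathbb{M}_{\mathbb{O}}\to\mathbb{R}^k$ by $\Phi_k(\mu)=(\mu(A_1),\ldots,\mu(A_k))$. Since each $A_i$ is bounded away from $\mathbb{C}$, one has $0\le\mu(A_i)<\infty$ for every $\mu\in\mathbb{M}_{\mathbb{O}}$, so $\Phi_k$ is well defined and takes values in $\mathbb{R}^k$, and it is $\mathcal{M}_{\mathbb{O}}$-measurable by Proposition~\ref{prop:algebra}. By construction $P_n\circ\Phi_k^{-1}$ and $P\circ\Phi_k^{-1}$ are precisely the joint laws of $(\xi_n(A_1),\ldots,\xi_n(A_k))$ and of $(\xi(A_1),\ldots,\xi(A_k))$, so it suffices to prove $P_n\circ\Phi_k^{-1}\overset{w}{\to}P\circ\Phi_k^{-1}$ on $\mathcal{B}(\mathbb{R}^k)$.

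Next I would locate the set $D$ of discontinuity points of $\Phi_k$, which is Borel, being the complement of the $G_\delta$-set of continuity points. The claim is that $D\subset\bigcup_{i=1}^k\{\mu\in\mathbb{M}_{\mathbb{O}}:\mu(\partial A_i)>0\}$: indeed, if $\mu(\partial A_i)=0$ for every $i$ and $\mu_m\overset{M}{\to}\mu$ in $\mathbb{M}_{\mathbb{O}}$, then, each $A_i$ being bounded away from $\mathbb{C}$ with $\mu(\partial A_i)=0$, the Portmanteau theorem for $\mathbb{M}_{\mathbb{O}}$-convergence (Theorem~2.1 in \cite{Lindskog:2014ia}) gives $\mu_m(A_i)\to\mu(A_i)$ for each $i$, hence $\Phi_k(\mu_m)\to\Phi_k(\mu)$ in $\mathbb{R}^k$, so that $\Phi_k$ is continuous at such a $\mu$. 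Since each $A_i\in S_P$, every set in that union is $P$-null, whence $P(D)=0$.

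Finally, from $P_n\overset{w}{\to}P$ on $(\mathbb{M}_{\mathbb{O}},d_{\mathbb{M}_{\mathbb{O}}})$ and $P(D)=0$, the continuous mapping theorem for weak convergence on metric spaces (Proposition~A2.3.V in \cite{Daley:2003tf}) yields $P_n\circ\Phi_k^{-1}\overset{w}{\to}P\circ\Phi_k^{-1}$; as the family $\{A_1,\ldots,A_k\}$ was arbitrary, the fidi distributions converge weakly, which is the assertion. I expect the continuity claim for $\Phi_k$ to be the main obstacle — it is the one place where the particular structure of $\mathbb{M}_{\mathbb{O}}$-convergence enters, through the Portmanteau characterisation, which ensures that $\mu\mapsto\mu(A)$ is continuous at every $\mu$ with $\mu(\partial A)=0$ for $A$ bounded away from $\mathbb{C}$ — while the remaining steps are a routine application of standard weak-convergence machinery.
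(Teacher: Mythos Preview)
Your proposal is correct and follows essentially the same approach as the paper: define the evaluation map $\Phi_k$, show its discontinuity set $D$ has $P(D)=0$ via the stochastic continuity assumption, and invoke the continuous mapping theorem (Proposition~A2.3.V in \cite{Daley:2003tf}). Your version is in fact more explicit than the paper's, since you spell out the role of the Portmanteau theorem for $\mathbb{M}_{\mathbb{O}}$-convergence in establishing that $\Phi_k$ is continuous at every $\mu$ with $\mu(\partial A_i)=0$, a step the paper leaves implicit.
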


To show that weak convergence of the finite-dimensional convergence
implies weak convergence, we need to show that the converging sequence
of probability measures are \emph{uniformly tight}. A family of
probability measures $(P_t)_{t\in \mathcal{T}}$ with an index set
$\mathcal{T}$ on $\mathcal{M}_{\mathbb{O}}$ is \emph{uniformly tight}
if for each $\varepsilon>0$, there exists a compact set $K\in
\mathcal{M}_{\mathbb{O}}$ such that $P_t(K)>1-\varepsilon$ for all
$t\in \mathcal{T}$.
\begin{lemma}\label{lem:tight}
For a family of probability measures $(P_t)_{t\in \mathcal{T}}$ on
$\mathcal{M}_{\mathbb{O}}$ to uniformly tight, it is necessary and
sufficient that there exists a sequence $(r_i)$ with $r_i\downarrow 0$
such that for each $i$ and any $\varepsilon,\varepsilon^{\prime}>0$, there exist
real numbers $M_i>0$ and compact set $K_i\subset \mathbb{S}\setminus
\mathbb{C}^{r_i}$ such that, uniformly for $t\in \mathcal{T}$,
\begin{align}
\label{eq:tight1}
P_t\big(\xi(\mathbb{S}\setminus
  \mathbb{C}^{r_i})>M_i\big)&<\varepsilon\,,\\ \label{eq:tight2}
P_t\big(\xi(\mathbb{S}\setminus (\mathbb{C}^{r_i}\cup
  K_i))<\varepsilon^{\prime}\big)&<\varepsilon\,.
\end{align}   
\end{lemma}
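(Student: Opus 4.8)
The plan is to prove Lemma~\ref{lem:tight} by reducing the tightness question on $\mathbb{M}_{\mathbb{O}}$ to tightness questions on the spaces of finite measures on $\mathbb{S}\setminus\mathbb{C}^{r_i}$, where Prohorov's theorem applies, and then to patch these together using the metric $d_{\mathbb{M}_{\mathbb{O}}}$ together with the fact that $(\mathbb{M}_{\mathbb{O}},d_{\mathbb{M}_{\mathbb{O}}})$ is complete and separable. The basic observation is that a set $K\subset\mathbb{M}_{\mathbb{O}}$ is relatively compact if and only if, for each $r$ in a sequence $r_i\downarrow 0$ with $\mathbb{C}^{r_i}$ ``nice'', the restriction maps $\mu\mapsto\mu^{(r_i)}$ send $K$ into a relatively compact subset of $\mathbb{M}_b(\mathbb{S}\setminus\mathbb{C}^{r_i})$ (finite measures under the Prohorov/weak topology), and that on each such space relative compactness is, by Prohorov, equivalent to a uniform mass bound plus a uniform inner-regularity (compact-approximation) condition. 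Conditions \eqref{eq:tight1} and \eqref{eq:tight2} are precisely the probabilistic (``in $P_t$-probability, uniformly in $t$'') translations of these two requirements, so the argument is a matter of carefully transferring back and forth between ``relatively compact set of measures'' and ``mass concentrated with high probability on such a set''.

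First I would fix the sequence $r_i\downarrow0$: by Lemma~\ref{lem:fidifinite}, for all but countably many $r$ the sphere $\partial\mathbb{C}^r$ is a stochastic continuity set, and more importantly the restriction map to $\mathbb{S}\setminus\mathbb{C}^r$ is continuous $d_{\mathbb{M}_{\mathbb{O}}}$-a.e.; I would choose the $r_i$ from this co-countable set so that the Portmanteau/continuous-mapping machinery of \cite{Lindskog:2014ia} applies to each level. Next, for the \emph{sufficiency} direction I assume \eqref{eq:tight1}–\eqref{eq:tight2}. Given $\varepsilon>0$, for each $i$ pick $M_i$ and $K_i$ with the two displayed bounds holding with $\varepsilon$ replaced by $\varepsilon 2^{-i-1}$ and $\varepsilon'$ replaced by, say, $1/i$. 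Define
\begin{equation*}
K=\bigcap_{i\ge1}\Big\{\mu\in\mathbb{M}_{\mathbb{O}}:\mu(\mathbb{S}\setminus\mathbb{C}^{r_i})\le M_i,\ \mu\big(\mathbb{S}\setminus(\mathbb{C}^{r_i}\cup K_i)\big)\le \tfrac1i\Big\}.
\end{equation*}
One checks that $K$ is closed in $(\mathbb{M}_{\mathbb{O}},d_{\mathbb{M}_{\mathbb{O}}})$ (each defining set is closed because the relevant functionals are, up to the choice of a continuity-set level $r_i$, lower/upper semicontinuous under $\mathbb{M}_{\mathbb{O}}$-convergence), and that the image of $K$ under $\mu\mapsto\mu^{(r_i)}$ satisfies the Prohorov conditions in $\mathbb{M}_b(\mathbb{S}\setminus\mathbb{C}^{r_i})$: bounded total mass $M_i$, and mass at most $1/j$ outside the compact set $K_j$ for every $j\ge i$, which forces relative compactness there since $1/j\to0$. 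Using completeness of the $M_0$-metric and the characterization of its convergence level-by-level, $K$ is compact in $\mathbb{M}_{\mathbb{O}}$. Finally $P_t(K^c)\le\sum_i P_t(\mu(\mathbb{S}\setminus\mathbb{C}^{r_i})>M_i)+\sum_i P_t(\mu(\mathbb{S}\setminus(\mathbb{C}^{r_i}\cup K_i))>1/i)<\varepsilon$ uniformly in $t$, which is uniform tightness.

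For the \emph{necessity} direction, assume $(P_t)$ is uniformly tight and, given $\varepsilon$, take a compact $K\subset\mathbb{M}_{\mathbb{O}}$ with $P_t(K)>1-\varepsilon$ for all $t$. Compactness of $K$ forces its restrictions $K^{(r_i)}$ to be relatively compact in each $\mathbb{M}_b(\mathbb{S}\setminus\mathbb{C}^{r_i})$, whence by Prohorov $\sup_{\mu\in K}\mu(\mathbb{S}\setminus\mathbb{C}^{r_i})=:M_i<\infty$ and, for any $\varepsilon'>0$, there is a compact $K_i\subset\mathbb{S}\setminus\mathbb{C}^{r_i}$ with $\sup_{\mu\in K}\mu(\mathbb{S}\setminus(\mathbb{C}^{r_i}\cup K_i))<\varepsilon'$; then $\{\mu(\mathbb{S}\setminus\mathbb{C}^{r_i})>M_i\}\subset K^c$ and $\{\mu(\mathbb{S}\setminus(\mathbb{C}^{r_i}\cup K_i))\ge\varepsilon'\}\subset K^c$ give \eqref{eq:tight1}–\eqref{eq:tight2} with the common bound $\varepsilon$. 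The main obstacle I anticipate is the clean equivalence ``$K$ relatively compact in $\mathbb{M}_{\mathbb{O}}$ $\iff$ all restrictions $K^{(r_i)}$ relatively compact in $\mathbb{M}_b(\mathbb{S}\setminus\mathbb{C}^{r_i})$'': proving this requires diagonal extraction of a convergent subsequence in each level and then checking the resulting level-wise limits are consistent restrictions of a single $\mathbb{M}_{\mathbb{O}}$-measure and that convergence holds in $d_{\mathbb{M}_{\mathbb{O}}}$, using the explicit series form of the metric and continuity of restriction at the chosen levels $r_i$. Everything else is a bookkeeping translation between measure-of-a-set bounds and in-probability bounds, together with an application of Prohorov's theorem on each Polish space $\mathbb{S}\setminus\mathbb{C}^{r_i}$.
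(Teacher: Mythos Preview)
Your proposal is correct and follows essentially the same route as the paper: both directions hinge on the characterization that a closed $K\subset\mathbb{M}_{\mathbb{O}}$ is compact iff for a sequence $r_i\downarrow 0$ the restrictions $\{\mu^{(r_i)}:\mu\in K\}$ satisfy the Prohorov conditions (uniform mass bound and uniform inner regularity) in $\mathbb{M}_b(\mathbb{S}\setminus\mathbb{C}^{r_i})$, and then the sufficiency direction builds $K$ as a countable intersection with a geometric $\varepsilon 2^{-i}$ budget while necessity reads the conditions off directly from a given compact $K$.

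The only differences are cosmetic. The paper quotes the compactness characterization from \cite{Daley:2003tf} and \cite{Lindskog:2014ia} rather than re-deriving it by diagonal extraction, and it uses a double-indexed family $\widetilde K_{ij}$ (one compact set per level $r_i$ and per tolerance $1/j$) whereas you collapse this to a single index by taking $\varepsilon'=1/i$ at level $r_i$ and then exploiting the nesting $\mathbb{S}\setminus\mathbb{C}^{r_i}\subset\mathbb{S}\setminus\mathbb{C}^{r_j}$ for $j\ge i$; both work. Your invocation of Lemma~\ref{lem:fidifinite} to pick ``continuity'' levels $r_i$ is not actually needed here---the compactness characterization and the probability estimates hold for any sequence $r_i\downarrow 0$, since no weak-convergence continuous-mapping step is used in this lemma.
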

The proof of Lemma~\ref{lem:tight} is in Section~\ref{sec:proof}. The next theorem is the main result in this section and its proof is
in Section~\ref{sec:proof}. 
\begin{theorem}\label{thm:fidiconvergence}
Let $(P_n)_{n\ge 1}$ and $P$ be distributions on
$\mathcal{M}_{\mathbb{O}}$. Then, $P_n\overset{w}{\to} P$ if and only if the
fidi distribution of $P_n$ converge weakly to those of $P$.  
\end{theorem}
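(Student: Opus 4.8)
The plan is to prove the two implications separately. The forward direction (weak convergence implies fidi convergence) is already essentially done in the text preceding the theorem: Lemma~\ref{lem:wc} gives exactly this, using the continuous mapping theorem applied to the projection maps $\Phi_k:\xi\mapsto(\xi(A_1),\ldots,\xi(A_k))$ for $A_i\in S_P$, together with the fact (Lemma~\ref{lem:fidifinite}) that $S_P$ is rich enough. So for this direction I would simply invoke Lemma~\ref{lem:wc} and note that the stochastic continuity sets form a $\pi$-system generating $\mathcal{S}_{\mathbb{O}}$, which is what makes the fidi distributions over $S_P$ meaningful as a convergence-determining class.

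The substance is the converse: fidi convergence implies $P_n\overset{w}{\to}P$. The standard strategy is Prohorov's theorem: first establish that $(P_n)$ is uniformly tight, then show every weakly convergent subsequence has limit $P$ (using that fidi distributions determine $P$ by Theorem~\ref{thm:fidi}), and conclude $P_n\overset{w}{\to}P$. For tightness I would verify the criterion in Lemma~\ref{lem:tight}. Fix a sequence $r_i\downarrow0$ with each $\mathbb{C}^{r_i}\in S_P$ (possible by Lemma~\ref{lem:fidifinite}). To get \eqref{eq:tight1}, note $\xi_n(\mathbb{S}\setminus\mathbb{C}^{r_i})\Rightarrow\xi(\mathbb{S}\setminus\mathbb{C}^{r_i})$ by fidi convergence, and a single real-valued weakly convergent sequence is tight, so $M_i$ can be chosen uniformly in $n$ (and then enlarged to also handle the finitely many small $n$). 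For \eqref{eq:tight2} I would use separability of $\mathbb{S}$: cover $\mathbb{S}\setminus\mathbb{C}^{r_i}$ by countably many small balls $B_{\delta}(x_j)$ whose boundaries are stochastic continuity sets (again Lemma~\ref{lem:fidifinite}), so that finite unions $K_i^{(m)}=\closure{\bigcup_{j\le m}B_{\delta}(x_j)}\cap(\mathbb{S}\setminus\mathbb{C}^{r_i})$ have $\xi(\partial)$-null boundaries; since $\xi(\mathbb{S}\setminus\mathbb{C}^{r_i})<\infty$ a.s., $\xi(\mathbb{S}\setminus(\mathbb{C}^{r_i}\cup K_i^{(m)}))\downarrow0$ a.s. as $m\to\infty$, and by fidi convergence plus a uniform-in-$n$ argument one finds $m$ making the probability in \eqref{eq:tight2} small uniformly. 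Then Lemma~\ref{lem:tight} yields uniform tightness.

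Given uniform tightness, Prohorov's theorem (in the Polish space $(\mathbb{M}_{\mathbb{O}},d_{\mathbb{M}_{\mathbb{O}}})$, which is complete and separable as recalled in Section~\ref{sec:space-mathbbm_m}) says $(P_n)$ is relatively compact for weak convergence. Let $Q$ be any subsequential weak limit, $P_{n_k}\overset{w}{\to}Q$. By the already-proved forward direction, the fidi distributions of $P_{n_k}$ converge to those of $Q$; but by hypothesis they converge to those of $P$, so $Q$ and $P$ have the same fidi distributions on the $\pi$-system $S_P$, hence agree on $\sigma(S_P)=\mathcal{S}_{\mathbb{O}}$-generated $\sigma$-algebra, hence $Q=P$ by Theorem~\ref{thm:fidi}. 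Since every subsequential limit equals $P$ and the sequence is relatively compact, $P_n\overset{w}{\to}P$.

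The main obstacle I expect is the verification of \eqref{eq:tight2}: passing from the almost-sure statement $\xi(\mathbb{S}\setminus(\mathbb{C}^{r_i}\cup K_i^{(m)}))\downarrow0$ to a bound that is uniform in $n$ requires care, because fidi convergence only gives, for each fixed $m$, that $\xi_n(\mathbb{S}\setminus(\mathbb{C}^{r_i}\cup K_i^{(m)}))\Rightarrow\xi(\mathbb{S}\setminus(\mathbb{C}^{r_i}\cup K_i^{(m)}))$ — one has to combine this with the monotone decay in $m$ of the limit and a diagonal/$\limsup$ argument (choosing $m$ for the limit, then the continuity-set property to transfer to the $n$'s, then absorbing small $n$) to get uniformity. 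This is the classical tightness-for-random-measures argument; everything else is bookkeeping with the results already established in the excerpt.
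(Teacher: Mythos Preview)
Your overall strategy matches the paper's exactly: invoke Lemma~\ref{lem:wc} for the forward direction, and for the converse verify the tightness criterion of Lemma~\ref{lem:tight} and then use Prohorov's theorem together with Theorem~\ref{thm:fidi} to identify every subsequential limit as $P$. Your handling of \eqref{eq:tight1} is also the same as the paper's.

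There is, however, a genuine gap in your verification of \eqref{eq:tight2}. You propose to take
\[
K_i^{(m)}=\closure{\bigcup_{j\le m}B_{\delta}(x_j)}\cap(\mathbb{S}\setminus\mathbb{C}^{r_i}),
\]
but this set need not be compact. The space $(\mathbb{S},d)$ is only assumed complete and separable, not locally compact; in an infinite-dimensional Banach space, for instance, the closure of a finite union of open balls is closed and bounded but not compact. The tightness criterion in Lemma~\ref{lem:tight} requires genuinely compact $K_i$, so your construction does not deliver what is needed. The obstacle you flagged (uniformity in $n$) is real but secondary; the missing idea is how to manufacture compactness.

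The paper's remedy is to run your covering argument at every scale simultaneously. For each $l\ge1$ one covers $\mathbb{S}\setminus\mathbb{C}^{r_i}$ by balls $B_{jl}$ of radius at most $2^{-l}$ (chosen via Lemma~\ref{lem:fidifinite} to be stochastic continuity sets), selects a finite subfamily $B_l=\bigcup_{j\le K_l}B_{jl}$ so that $\xi(\mathbb{S}\setminus(\mathbb{C}^{r_i}\cup B_l))$ is small with high probability under $P$ and hence (by fidi convergence and enlarging $K_l$) under all $P_n$, and then sets $K_i=\bigcap_{l\ge1}B_l$. This intersection is closed and, by construction, totally bounded (for every $\eps>0$ it sits inside a finite union of $\eps$-balls), hence compact by completeness of $\mathbb{S}$. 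The summation over $l$ of the probability bounds then gives \eqref{eq:tight2} uniformly in $n$.
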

Two equivalent conditions for weak convergence can be derived as the
corollaries to Theorem~\ref{thm:fidiconvergence}. 
\begin{corollary}\label{cor:lp}
 The two following conditions is equivalent to the weak
  convergence $P_n\to P$ with $f\in \mathcal{C}_{\mathbb{O}}$:
  \begin{enumerate}[(i)]
  \item The distribution of
    $\int_{\mathbb{O}}f\,d\xi$ under $P_n$ converges weakly to its
    distribution under $P$.
\item The Laplace functionals
  $L_n[f]=E_{P_n}\Big(\exp(-\int_{\mathbb{O}}f(x)\xi(dx)) \Big)$
  converge pointwise to the limit functional
  $L[f]=E_{P}\Big(\exp(-\int_{\mathbb{O}}f(x)\xi(dx)) \Big)$. 
 \end{enumerate}
\end{corollary}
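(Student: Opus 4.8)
The plan is to prove the cycle $P_n\overset{w}{\to}P\ \Rightarrow\ \text{(i)}\ \Rightarrow\ \text{(ii)}\ \Rightarrow\ P_n\overset{w}{\to}P$, which yields both asserted equivalences at once. For $P_n\overset{w}{\to}P\Rightarrow\text{(i)}$: fix $f\in\mathcal{C}_{\mathbb{O}}$ and observe that the map $\Phi_f:\mathbb{M}_{\mathbb{O}}\to[0,\infty)$, $\Phi_f(\mu)=\int_{\mathbb{O}}f\,d\mu$, is finite-valued (since $f$ is bounded and vanishes on some $\mathbb{C}^r$, while $\mu(\mathbb{S}\setminus\mathbb{C}^r)<\infty$) and continuous by the very definition of $\mathbb{M}_{\mathbb{O}}$-convergence, hence Borel measurable; the continuous mapping theorem for weakly convergent Borel probability measures on the metric space $(\mathbb{M}_{\mathbb{O}},d_{\mathbb{M}_{\mathbb{O}}})$ then gives $P_n\Phi_f^{-1}\rightsquigarrow P\Phi_f^{-1}$, i.e.\ (i). For $\text{(i)}\Rightarrow\text{(ii)}$: the function $t\mapsto e^{-t}$ is bounded and continuous on $[0,\infty)$, so $L_n[f]=E_{P_n}e^{-\int f\,d\xi}\to E_Pe^{-\int f\,d\xi}=L[f]$ follows directly from the weak convergence in (i).

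The substance is $\text{(ii)}\Rightarrow P_n\overset{w}{\to}P$. By Theorem~\ref{thm:fidiconvergence} it suffices to establish fidi convergence, so fix a finite family of disjoint sets $A_1,\dots,A_k\in S_P$ bounded away from $\mathbb{C}$, say all contained in $\mathbb{S}\setminus\mathbb{C}^{r_0}$. Each $\xi(A_j)$ is then $P$-a.s.\ finite, so the limiting joint Laplace transform $(\lambda_1,\dots,\lambda_k)\mapsto E_P\exp(-\sum_j\lambda_j\xi(A_j))$ is continuous at the origin, and by the multivariate continuity theorem for Laplace transforms of nonnegative random vectors it is enough to prove, for all $\lambda_j\ge 0$, that $E_{P_n}\exp(-\sum_j\lambda_j\xi_n(A_j))\to E_P\exp(-\sum_j\lambda_j\xi(A_j))$. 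The obstruction is that $g=\sum_j\lambda_j\1_{A_j}$ lies in $BM(\mathbb{S})$ but not in $\mathcal{C}_{\mathbb{O}}$, so (ii) does not apply to it directly. I would remove this obstruction by sandwiching $g$ between continuous functions: for small $\delta>0$ set $g_+^\delta=\sum_j\lambda_j(1-\delta^{-1}d(\cdot,A_j))^+$ and $g_-^\delta=\sum_j\lambda_j(1-\delta^{-1}d(\cdot,A_j^{(2\delta)}))^+$ with $A_j^{(2\delta)}=\{x\in A_j:d(x,A_j^{c})\ge 2\delta\}$; using that the $A_j$ are disjoint one checks $g_-^\delta,g_+^\delta\in\mathcal{C}_{\mathbb{O}}$ (nonnegative, bounded, continuous, vanishing on $\mathbb{C}^{r_0/2}$ for $\delta$ small), $g_-^\delta\le g\le g_+^\delta$, and, as $\delta\downarrow 0$, $g_+^\delta\downarrow\sum_j\lambda_j\1_{\overline{A_j}}$ monotonically while $g_-^\delta\to\sum_j\lambda_j\1_{A_j^{\circ}}$ pointwise.

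With these functions, $g_+^\delta\ge g\ge g_-^\delta$ gives $L_n[g_+^\delta]\le E_{P_n}e^{-\int g\,d\xi_n}\le L_n[g_-^\delta]$ for every $n$; letting $n\to\infty$ and invoking hypothesis (ii),
\[
L[g_+^\delta]\ \le\ \liminf_{n\to\infty}E_{P_n}e^{-\int g\,d\xi_n}\ \le\ \limsup_{n\to\infty}E_{P_n}e^{-\int g\,d\xi_n}\ \le\ L[g_-^\delta].
\]
Since $A_j\in S_P$ forces $\xi(\partial A_j)=0$ $P$-a.s., one has $\int g_+^\delta\,d\xi\downarrow\int g\,d\xi$ $P$-a.s.\ (monotone convergence, the sets $\overline{A_j}\setminus A_j\subset\partial A_j$ being $\xi$-null) and $\int g_-^\delta\,d\xi\to\int g\,d\xi$ $P$-a.s.\ (dominated convergence), whence both $L[g_+^\delta]$ and $L[g_-^\delta]$ tend to $E_Pe^{-\int g\,d\xi}=E_P\exp(-\sum_j\lambda_j\xi(A_j))$ as $\delta\downarrow 0$; this squeezes the desired limit. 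Thus the fidi distributions over the $\pi$-system $S_P$ converge, $S_P$ generates $\mathcal{S}_{\mathbb{O}}$ by Lemma~\ref{lem:fidifinite}, and Theorem~\ref{thm:fidiconvergence} closes the cycle. The main obstacle is precisely this last implication: one must pass from Laplace-functional convergence against the continuous test functions of $\mathcal{C}_{\mathbb{O}}$ to convergence of fidi distributions built from indicators, and the sandwiching works only because stochastic continuity of the $A_j$ guarantees that the boundaries $\partial A_j$ — where the upper and lower continuous approximants disagree in the limit — carry no $\xi$-mass under $P$; the construction of $g_\pm^\delta$ inside $\mathcal{C}_{\mathbb{O}}$ and the verification $g_-^\delta\le g\le g_+^\delta$ are the only routine points needing disjointness of the $A_j$.
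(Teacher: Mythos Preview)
Your argument is correct and follows essentially the same route as the paper: reduce to fidi convergence via Theorem~\ref{thm:fidiconvergence}, then approximate the step function $\sum_j\lambda_j\1_{A_j}$ with $A_j\in S_P$ from above and below by functions in $\mathcal{C}_{\mathbb{O}}$ and sandwich, using that $\xi(\partial A_j)=0$ $P$-a.s.\ to make the upper and lower limits coincide. Your version is simply more explicit than the paper's, which records the same idea tersely (finding $h_l^\pm\in\mathcal{C}_{\mathbb{O}}$ with $h_l^-\uparrow f$ and $h_l^+\downarrow f$ and invoking Proposition~\ref{prop:lp-convergence}).
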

\begin{proof}
  Condition (ii) is equivalent to (i) by well-known results on Laplace
  transform. 

Think of the simple functions of the form $f=\sum_{i=1}^kc_i\bm{1}_{A_i}$,
where $k$ is a finite positive integer, $\sum_i|c_i|<\infty$ and
$(A_i)_{i\ge 1}$ are a family of Borel sets with $A_i\in
S_P$. Convergence of distributions of the integrals
$\int_{\mathbb{O}}f\,d\xi$ is equivalent to the finite-dimensional
convergence for every finite $k$. Following a classical arguments, we
can find $h^+_l,h^-_l\in \mathcal{C}_{\mathbb{O}}$ satisfying that
$0<h^-_l(x)\uparrow f(x)$ and $h^+_l(x)\downarrow f(x)$ 
holds uniformly for every $x\in \mathbb{S}$ as $l\to\infty$. It
follows that condition (i) and Proposition~\ref{prop:lp-convergence}
implies the finite-dimensional convergence 
and hence weak convergence.  
\end{proof}

\section{Poisson processes}
\label{sec:poisson-processes}
Poisson processes play a vital role in the applications of the point
process and we start with the definition of a Poisson process. 

\begin{definition}\label{def:prm}
Given a random measure $\mu\in \mathbb{M}_{\mathbb{O}}$, a point process $N$ is called a {\em
  Poisson process} or {\em Poisson random measure} (PRM) with mean
measure $\mu$ if $N$ satisfies
\begin{enumerate}[(i)]
\item For any $A\in \mathcal{S}_{\mathbb{O}}$ and any non-negative integer
  $k$, 
\begin{eqnarray*}
  P(N(A)=k)=
  \begin{cases}
    \exp(\mu(A))(\mu(A))^{k}/k!\,, & \mu(A)<\infty\,, \\
0\,, & \mu(A)=\infty\,.
  \end{cases}
\end{eqnarray*}
\item\label{item:5} For any $k\ge 1$, if $A_1, \ldots, A_k$ are
  mutually disjoint Borels sets bounded away from $\mathbb{C}$, then $N(A_i)$,
  $i=1,\ldots, k$ are independent random variables. 
\end{enumerate}
\end{definition}
For short, we write {\em Poisson process with mean measure $\mu$} as
{\bf PRM($\bm{\mu}$)}. 

\begin{proposition}\label{prop:prm}
  PRM($\mu$) exists and its law is uniquely determined by conditions
  (i) and (ii) in Definition~\ref{def:prm}. Moreover, the Laplace
  functional of PRM($\mu$) is given by
\begin{equation}\label{eq:lpprm}
  L_N[f]=\exp\Big(- \int_{\mathbb{O}}(1-e^{-f(x)})\mu(dx) \Big)\,, \quad f\in BM(\mathbb{S})\,,
\end{equation}
and conversely a point process with Laplace functional of the form
\eqref{eq:lpprm} must be PRM($\mu$). 
\end{proposition}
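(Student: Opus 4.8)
The plan is to prove the three assertions in turn: existence of PRM($\mu$), uniqueness of its law, and the Laplace functional formula together with its converse. For \emph{existence}, I would use the $\sigma$-finite exhaustion of $\mathbb{O}$ by the sets $\mathbb{O}^{(j)}$ introduced in the proof of Lemma~\ref{lem:decomposition}, namely $\mathbb{O}^{(1)}=\mathbb{S}\setminus\mathbb{C}^{r_1}$ and $\mathbb{O}^{(j+1)}=\mathbb{C}^{r_j}\setminus\mathbb{C}^{r_{j+1}}$ with $r_j=1/j$. On each $\mathbb{O}^{(j)}$ the restriction $\mu_j=\mu(\cdot\cap\mathbb{O}^{(j)})$ is a finite measure, so by the classical construction (e.g. Resnick \cite{RES87}, or Daley--Vere-Jones \cite{Daley:2008kf}) there is a Poisson process $N_j$ on $\mathbb{O}^{(j)}$ with mean measure $\mu_j$: draw a Poisson($\mu_j(\mathbb{O}^{(j)})$) number of points i.i.d. from the normalized $\mu_j$ (and if $\mu_j(\mathbb{O}^{(j)})=\infty$, take $N_j$ to put infinitely many points, so $N_j(\mathbb{O}^{(j)})=\infty$ a.s.). Taking the $N_j$ independent and setting $N=\sum_j N_j$, one checks $N\in\mathbb{N}_{\mathbb{O}}$ a.s.\ (each $\mathbb{S}\setminus\mathbb{C}^r$ meets only finitely many $\mathbb{O}^{(j)}$, on each of which $N$ is a.s.\ finite when $\mu$ is, and one uses Lemma~\ref{lem:decomposition}(ii)), that $N$ is a point process by Theorem~\ref{thm:randommeasure}, and that $N$ satisfies (i) and (ii) of Definition~\ref{def:prm}: disjointness across the $\mathbb{O}^{(j)}$ blocks gives independence and the Poisson marginals from the finite case, while the superposition/thinning properties of finite Poisson processes handle sets straddling several blocks. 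The point requiring care is the case $\mu(A)=\infty$, where one must produce $P(N(A)=k)=0$ for every finite $k$; this follows because then $\mu_j(A\cap\mathbb{O}^{(j)})=\infty$ for some $j$, forcing $N_j(A)=\infty$ a.s.

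For the \emph{Laplace functional formula} \eqref{eq:lpprm}, I would first establish it for indicator-type simple functions $f=\sum_{i=1}^k\lambda_i\mathbf{1}_{A_i}$ with the $A_i$ disjoint and bounded away from $\mathbb{C}$: there independence (Definition~\ref{def:prm}(ii)) and the Poisson marginals give $L_N[f]=\prod_i E e^{-\lambda_i N(A_i)}=\prod_i\exp(-(1-e^{-\lambda_i})\mu(A_i))$, which is exactly the right-hand side of \eqref{eq:lpprm} for such $f$. A general $f\in BM(\mathbb{S})$ vanishes on some $\mathbb{C}^r$ and is bounded, hence is a uniform limit on $\mathbb{S}\setminus\mathbb{C}^r$ of simple functions $f_n$ that also vanish on $\mathbb{C}^r$; then $\xi\mapsto\exp(-\xi(f))$ and $\xi\mapsto\exp(-\int(1-e^{-f})d\mu)$ both pass to the limit because $N(\mathbb{S}\setminus\mathbb{C}^r)<\infty$ a.s.\ and $\mu(\mathbb{S}\setminus\mathbb{C}^r)<\infty$ — this is precisely Proposition~\ref{prop:lp-convergence}(iii) on the left and dominated convergence on the right. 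This yields \eqref{eq:lpprm} for all $f\in BM(\mathbb{S})$.

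\emph{Uniqueness}, and simultaneously the \emph{converse}, then come for free: by Theorem~\ref{thm:laplace} the family $\{L_N[f]:f\in BM(\mathbb{S})\}$ determines the law of a point process, and we have just shown that any PRM($\mu$) has Laplace functional \eqref{eq:lpprm}; hence two PRM($\mu$)'s have the same law, and any point process whose Laplace functional equals \eqref{eq:lpprm} has the same law as the PRM($\mu$) constructed above, so it is a PRM($\mu$). I expect the main obstacle to be the bookkeeping in the existence step — verifying that the superposed process $N$ genuinely lands in $\mathbb{N}_{\mathbb{O}}$ and satisfies Definition~\ref{def:prm} for arbitrary $A\in\mathcal{S}_{\mathbb{O}}$ (not just blocks), including the $\mu(A)=\infty$ alternative — whereas the Laplace computation and the uniqueness/converse are routine given the tools already developed.
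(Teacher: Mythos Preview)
Your overall strategy is sound and close to the paper's, but there is one concrete error and one structural difference worth noting.

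The error is in your handling of the case $\mu(A)=\infty$. You claim that then $\mu_j(A\cap\mathbb{O}^{(j)})=\infty$ for some $j$, but this is impossible: each $\mathbb{O}^{(j)}$ is bounded away from $\mathbb{C}$ (indeed $\mathbb{O}^{(j)}\subset\mathbb{S}\setminus\mathbb{C}^{r_j}$), so $\mu_j(\mathbb{O}^{(j)})<\infty$ by the very definition of $\mathbb{M}_{\mathbb{O}}$, and in particular every $\mu_j(A\cap\mathbb{O}^{(j)})$ is finite. What actually happens when $\mu(A)=\infty$ is that $A$ cannot be bounded away from $\mathbb{C}$, so it meets infinitely many $\mathbb{O}^{(j)}$ and $\sum_j\mu_j(A\cap\mathbb{O}^{(j)})=\infty$. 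Then $N(A)=\sum_j N_j(A\cap\mathbb{O}^{(j)})$ is a sum of independent Poisson variables whose parameters diverge, hence $N(A)=\infty$ a.s.\ (e.g.\ because $E e^{-tN(A)}=\exp\bigl(-(1-e^{-t})\sum_j\mu_j(A\cap\mathbb{O}^{(j)})\bigr)=0$ for every $t>0$). This repairs the gap, but the argument you wrote does not work as stated.

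The structural difference is that the paper reorders the steps so as to bypass precisely the bookkeeping you flag. It first establishes the Laplace formula \eqref{eq:lpprm} and its converse (following Resnick, which amounts to recovering the fidi distributions by plugging simple functions into $L_N$), and only then turns to existence. Having built the independent pieces $N_j$ and set $N=\sum_j N_j$, the paper simply computes
\[
L_N[f]=\prod_j L_{N_j}[f]=\prod_j\exp\Bigl(-\int_{\mathbb{O}}(1-e^{-f})\,d\mu_j\Bigr)=\exp\Bigl(-\int_{\mathbb{O}}(1-e^{-f})\,d\mu\Bigr)
\]
and invokes the already-proved converse to conclude that $N$ is PRM($\mu$). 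This sidesteps the direct verification of Definition~\ref{def:prm}(i),(ii) for arbitrary $A\in\mathcal{S}_{\mathbb{O}}$, including the $\mu(A)=\infty$ alternative. Your route is valid once the gap above is fixed, but the paper's ordering makes existence an immediate consequence of the Laplace machinery rather than a separate verification.
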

\begin{proof}[Sketch of the proof.]
  To prove that the Laplace functional of
  PRM($\mu$) is given by \eqref{eq:lpprm}, one can choose
  $f=c\bm{1}_A$ for $c\ge 0$ and $A\in \mathcal{S}_{\mathbb{O}}$
  bounded away from $\mathbb{C}$. Following the lines in the proof of
  Proposition 3.6, \cite{RES87}, the Laplace functional $L_N[f]$ has
  the form \eqref{eq:lpprm}. Let 
\begin{align}
\label{eq:simplefunction}
f=\sum_{i=1}^kc_i\bm{1}_{A_i}\,,
\end{align}
  where $k>0$, $c_i\ge 0$ and $A_1,\ldots,A_k$ are disjoint sets
  bounded away from $\mathbb{C}$. Similarly, it can be shown that the
  Laplace functional $L_N[f]$ has the form \eqref{eq:lpprm}. Then for
  any $f\in BM(\mathbb{S})$, there exists simple functions $f_n$ of
  the form \eqref{eq:simplefunction} such that $f_n\uparrow f$ with
  $\sup_{x\in \mathbb{O}}|f_n(x)-f(x)|\to 0$ as $n\to \infty$. By
  Proposition 3.6, we have that the Laplace functional $L_N[f]$ has
  the form \eqref{eq:lpprm}. Conversely, it is easy to prove following
  the lines of the lines in the proof of Proposition 3.6,
  \cite{RES87}.  

The proof of the existence of PRM($\mu$) is through construction. we
will use the same trick in the proof of Lemma~\ref{lem:decomposition}
to divide $\mathbb{O}$ into countable disjoint subspaces
$\mathbb{O}^{(r_j)}$, $j=1,2,\ldots$. Then let $\mu_j(\cdot)=\mu(\cdot
\cap \mathbb{O}^{(r_j)})$ for $\mu\in \mathbb{M}_{\mathbb{O}}$. Using
the arguments in the proof of Proposition 3.6, \cite{RES87}, it is
easy to construct PRM($\mu_i$) for $i\ge 1$, named $N_i$. Let
$N=\sum_{i=1}^{\infty}N_i$. For $f\in BM(\mathbb{S})$,
\begin{eqnarray*}
L_N[f] &= & E\exp(-\sum_{i=1}^{\infty}N_i(f)) =\lim_{n\to
            \infty}E\exp\Big(-\sum_{i=1}^nN_i(f)\Big)\\ 
 & =& \lim_{n\to \infty}\prod_{i=1}^nE\exp(-N_i(f))\\ 
 & =& \lim_{n\to \infty}\prod_{i=1}^n E\exp \Big(
      -\int_{\mathbb{O}}(1-e^{-f(x)})\mu_i(dx)\Big)\\ 
 & =& \exp \Big(-\int_{\mathbb{O}}(1-e^{-f(x)})\sum_{i=1}^{\infty}\mu_i(dx)
      \Big)\\ 
 & =& \exp \Big(-\int_{\mathbb{O}}(1-e^{-f(x)})\mu(dx) \Big)\,.
\end{eqnarray*}
This shows that PRM($\mu$) exists. 
\end{proof}
A new Poisson process can be constructed by mapping points of a
Poisson process. Recall that the Dirac measure
$\delta_x$ for $x\in \mathbb{S}$ is defined by 
\begin{eqnarray*}
  \delta_x(A)=
  \begin{cases}
    1\,, & x\in A\,,\\
0\,, & x\notin A\,,
  \end{cases}
\quad A\in \mathcal{S}\,.
\end{eqnarray*}
\begin{proposition}\label{prop:trans1}
Assume that the complete and separable spaces $(\mathbb{S}_1,d_1)$ and
$(\mathbb{S}_2,d_2)$ with closed cones $\mathbb{C}_1\subset
\mathbb{S}_1$ and $\mathbb{C}_2\subset \mathbb{S}_2$ are equipped with
scalar multiplication. Let $\mathbb{O}_1=\mathbb{S}_1\setminus
\mathbb{C}_1$ and $\mathbb{O}_2=\mathbb{S}_2\setminus
\mathbb{C}_2$. Denote $\mathcal{S}_{\mathbb{O}_i}$, $i=1,2$ as the
corresponding $\sigma$-algebra of $\mathbb{O}_i$, $i=1,2$,
correspondingly. Let $T:(\mathbb{S}_1,\mathcal{S}_{\mathbb{O}_1})\to
(\mathbb{S}_2,\mathcal{S}_{\mathbb{O}_2})$ be a measurable mapping
satisfying the condition that for every $\varepsilon>0$, 
\begin{equation}\label{eq:away}
  \inf\{d_1(x,\mathbb{C}_1):d_2(Tx,\mathbb{C}_2)>\varepsilon\}>0\,.
\end{equation}
Then if $N$ is PRM($\mu$) on $\mathbb{O}_1$, then
\begin{eqnarray*}
\widetilde{N}_1=N\circ T^{-1}
\end{eqnarray*}  
is PRM($\widetilde{\mu}$) on $\mathbb{O}_2$ with
$\widetilde{\mu}=\mu\circ T^{-1}$. If we have a representation
\begin{eqnarray*}
  N=\sum_i \delta_{X_i}\,,
\end{eqnarray*}
then $\widetilde{N}=N\circ T^{-1}=\sum_{i}\delta_{TX_i}$. 
\end{proposition}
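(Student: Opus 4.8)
The plan is to verify that $\widetilde N = N\circ T^{-1}$ satisfies the two defining conditions of a $\mathrm{PRM}(\widetilde\mu)$ in Definition~\ref{def:prm}, and then to identify $\widetilde\mu$. First I would check that $\widetilde\mu = \mu\circ T^{-1}$ is a well-defined element of $\mathbb{M}_{\mathbb{O}_2}(\mathbb{S}_2)$: for each $r>0$, condition \eqref{eq:away} with $\varepsilon = r$ gives $T^{-1}(\mathbb{S}_2\setminus \mathbb{C}_2^r) \subset \mathbb{S}_1\setminus \mathbb{C}_1^{r'}$ for some $r'>0$, hence $\widetilde\mu(\mathbb{S}_2\setminus \mathbb{C}_2^r) = \mu(T^{-1}(\mathbb{S}_2\setminus \mathbb{C}_2^r)) \le \mu(\mathbb{S}_1\setminus \mathbb{C}_1^{r'}) < \infty$. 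The same computation shows that for $A\in \mathcal{S}_{\mathbb{O}_2}$ bounded away from $\mathbb{C}_2$, the set $T^{-1}(A)$ is bounded away from $\mathbb{C}_1$, so $\widetilde N(A) = N(T^{-1}(A))$ is a well-defined random variable; by the corollary following Theorem~\ref{thm:randommeasure}, $\widetilde N$ is a point process on $\mathbb{O}_2$ (measurability of $T$ ensures $T^{-1}(A)\in \mathcal{S}_{\mathbb{O}_1}$, and finiteness of $\widetilde N$ on complements of neighborhoods of $\mathbb{C}_2$ follows as above, so $\widetilde N$ really takes values in $\mathbb{N}_{\mathbb{O}_2}$).

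Next I would verify the Poisson property directly. For $A\in \mathcal{S}_{\mathbb{O}_2}$ bounded away from $\mathbb{C}_2$, $\widetilde N(A) = N(T^{-1}A)$, and since $T^{-1}A$ is bounded away from $\mathbb{C}_1$, $N(T^{-1}A)$ is Poisson distributed with parameter $\mu(T^{-1}A) = \widetilde\mu(A)$; this gives condition (i). For condition (ii), if $A_1,\dots,A_k \in \mathcal{S}_{\mathbb{O}_2}$ are mutually disjoint and bounded away from $\mathbb{C}_2$, then $T^{-1}A_1,\dots,T^{-1}A_k$ are mutually disjoint and bounded away from $\mathbb{C}_1$, so $N(T^{-1}A_1),\dots,N(T^{-1}A_k)$ are independent by the Poisson property of $N$, i.e.\ $\widetilde N(A_1),\dots,\widetilde N(A_k)$ are independent. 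By Proposition~\ref{prop:prm}, $\widetilde N$ is $\mathrm{PRM}(\widetilde\mu)$; alternatively one may compute the Laplace functional, $L_{\widetilde N}[g] = L_N[g\circ T] = \exp(-\int_{\mathbb{O}_1}(1-e^{-g(Tx)})\mu(dx)) = \exp(-\int_{\mathbb{O}_2}(1-e^{-g(y)})\widetilde\mu(dy))$ for $g\in BM(\mathbb{S}_2)$ (noting $g\circ T\in BM(\mathbb{S}_1)$ precisely because of \eqref{eq:away}), and invoke the converse half of Proposition~\ref{prop:prm}.

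Finally, for the representation statement: if $N = \sum_i \delta_{X_i}$ with the $X_i$ in $\mathbb{O}_1$ (and only finitely many in each $\mathbb{S}_1\setminus \mathbb{C}_1^r$), then for $A\in\mathcal{S}_{\mathbb{O}_2}$, $\widetilde N(A) = N(T^{-1}A) = \sum_i \delta_{X_i}(T^{-1}A) = \sum_i \mathbf{1}_{A}(TX_i) = \sum_i \delta_{TX_i}(A)$, which is the claimed identity $\widetilde N = \sum_i \delta_{TX_i}$; condition \eqref{eq:away} guarantees that only finitely many $TX_i$ land in any $\mathbb{S}_2\setminus\mathbb{C}_2^r$, so the right-hand side is a legitimate counting measure in $\mathbb{N}_{\mathbb{O}_2}$.

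The only genuinely delicate point — the place where hypothesis \eqref{eq:away} is doing essential work — is the repeated claim that preimages of sets bounded away from $\mathbb{C}_2$ are bounded away from $\mathbb{C}_1$ (and the parallel fact that $g\circ T\in BM(\mathbb{S}_1)$ when $g\in BM(\mathbb{S}_2)$). Everything else is a routine transfer of the Poisson axioms through $T^{-1}$; without \eqref{eq:away} the image measure could fail to be locally finite on $\mathbb{O}_2$ and $\widetilde N$ would not even be a well-defined point process, so I would make sure to state and use \eqref{eq:away} carefully at each of these steps rather than treat them as automatic.
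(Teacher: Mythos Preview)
Your proof is correct and in fact more thorough than the paper's. The paper's argument is purely via the Laplace functional: it notes that \eqref{eq:away} forces $f\circ T\in BM(\mathbb{S}_1)$ and $\mu\circ T^{-1}\in\mathbb{M}_{\mathbb{O}_2}$, then computes $L_{\widetilde N}[f]=L_N[f\circ T]$ and changes variables to obtain the PRM($\mu\circ T^{-1}$) Laplace functional, invoking the converse half of Proposition~\ref{prop:prm}. You list this as your alternative route, but your primary approach---checking Definition~\ref{def:prm} directly by pulling back sets through $T^{-1}$---is a genuinely different and slightly more elementary line, since it does not rely on the Laplace characterization at all. Both methods hinge on the same key observation (preimages of sets bounded away from $\mathbb{C}_2$ are bounded away from $\mathbb{C}_1$), which you identify correctly. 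You also supply two things the paper leaves implicit: the verification that $\widetilde N$ is a bona fide point process in $\mathbb{N}_{\mathbb{O}_2}$, and the proof of the representation $\widetilde N=\sum_i\delta_{TX_i}$.
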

\begin{proof}
  Let $f\in BM(\mathbb{S}_2)$ and there exist $r>0$ such that $f$
  vanishes on $\mathbb{C}_2^r=\{x\in
  \mathbb{O}_2:d(x,\mathbb{C}_2)<r\}$. The condition \eqref{eq:away}
  implies that for $r>0$, there exists $\widetilde{r}>0$ such that
  $T^{-1}(x_2)\in \mathbb{S}_1\setminus \mathbb{C}_1^{\widetilde{r}}$ for
  $x\in \mathbb{S}_2\setminus \mathbb{C}_2^r$. Therefore, $f\circ T
  \in BM(\mathbb{S}_1)$ and $\mu\circ T^{-1}\in
  \mathbb{M}_{\mathbb{O}_2}$. An application of
  Proposition~\ref{prop:prm} yields that
\begin{eqnarray*}
L_{\widetilde{N}}[f] &=& E\exp(-\widetilde{N}(f))=E\exp
                         \Big(-\int_{\mathbb{O}_2}f(x_2)\,N\circ
                         T^{-1}(\omega,dx_2) \Big)\\
 & =& E\exp \Big(-\int_{\mathbb{O}_1}f(Tx_1)\,N(\omega,dx_1) \Big)\\ 
 & =& \exp \Big(-\int_{\mathbb{O}_1}(1-e^{-f\circ T})\,d\mu\Big)\\ 
 & =& \exp \Big(-\int_{\mathbb{O}_2}(1-e^{-f(x)})\,\mu\circ T^{-1}(dx)
      \Big)\,,
\end{eqnarray*}
which is the Laplace functional of PRM($\mu\circ T^{-1}$) on
$\mathbb{S}_2$. 
\end{proof}

We next construct a new PRM living in a higher dimensional space from a
given PRM. Let $(\mathbb{S},d_1)$ with a closed cone $\mathbb{C}$ be a
complete and separable space and let $(\mathbb{K},d_2)$ be a
complete and separable space. Due
to the separability of $\mathbb{K}$, there exists a countable
dense $D\subset \mathbb{K}$ and $r_0>0$ such that the open
neighborhoods $\{y\in \mathbb{K}:d(x,y)<r_0\}$ for each
$x\in D$ covers $\mathbb{K}$. Denote $\mathcal{K}$ as the Borel
$\sigma$-algebra on $\mathbb{K}$. We define a transition function
$G:\mathbb{K}\times \mathcal{O}\to [0,1]$ from $\mathbb{O}$ to
$\mathbb{K}$: $G(F,\cdot)$ is $\mathcal{S}$-measurable for $F\in
\mathcal{K}$ and $G(\cdot,x)$ is a probability measure on
$\mathcal{K}$ for each $x\in \mathbb{O}$. Assume that the sequence of
random elements $(K_n)_{n\ge 1}$ taking values in $\mathbb{K}$ is
conditionally independent given the sequence $(X_n)_{n\ge 1}$ taking
values in $\mathbb{O}$, that is,
\begin{equation}
\label{eq:condindep}
P\Big(K_i\in F\mid (X_n)_{n\ge 1}, (K_j)_{j\neq n}\Big)=G(X_i,F)\,, \quad i\ge
1\,, F\in \mathcal{K}\,.
\end{equation}
Consider the space $\widetilde{\mathbb{S}}=(\mathbb{K},\mathbb{S})$
assigned with the Euclidean distance
\begin{align}
\label{eq:ddis}
\widetilde{d}((x_1,y_1),(x_2,y_2))=\sqrt{(d_1(x_1,x_2))^2+(d_2(y_1,y_2))^2}\,,
\end{align}
where $(x_i,y_i)\in \mathbb{K}\times \mathbb{S}$, $i=1,2$. Let
$\mathbb{\widetilde{C}}=\mathbb{K}\times \mathbb{C}$ and
$\mathbb{\widetilde{O}}=\mathbb{\widetilde{S}}\setminus
\mathbb{\widetilde{C}}$. The set $\mathbb{\widetilde{C}}$ is
closed. Now we define a scalar multiplication on
$\mathbb{\widetilde{S}}$ as a map $[0,\infty)\times
\mathbb{\widetilde{S}}\to \mathbb{\widetilde{S}}: (\lambda, (x,y))\to
(\lambda x,y)$. It is trivial to show that the conditions (i), (iii) and (iii) in
Definition~\ref{def:scalar} are satisfied. Since
$\widetilde{d}((x,y),\mathbb{\widetilde{C}})=d_1(x,\mathbb{C})$ for
$(x,y)\in \mathbb{\widetilde{S}}$, the condition (iv) in
Definition~\ref{def:scalar} is also satisfied. Moreover,
$\mathbb{\widetilde{C}}$ is also a cone. According to the arguments
above, the space
$(\mathbb{M}_{\mathbb{\widetilde{O}}},\mathcal{M}_{\mathbb{\widetilde{O}}})$
is also well defined. The set
\[\mathbb{\widetilde{C}}^r=\{(x,y):\widetilde{d}((x,y),\mathbb{\widetilde{C}})<r\}=\{(x,y):d_1(x,\mathbb{C})<r\}\,,\]
is then an open set. Let $BM(\mathbb{\widetilde{S}})$ be a class of
the bounded and measurable functions $f:\mathbb{\widetilde{S}}\to
[0,\infty)$ vanishing on $\mathbb{\widetilde{C}}^{r}$ for some $r>0$.
Denote $\mathcal{\widetilde{S}}_{\mathbb{\widetilde{O}}}$ as the
Borel $\sigma$-algebra of $\mathbb{\widetilde{O}}$ generated by a
$\pi$-system consisting of the sets of the form $A\times F$ with $F\in
\mathcal{K}$ and $A\in \mathcal{B}(\mathbb{C}^r)$ for $r>0$.

Similar to Lemma 3.9 and 3.10 in \cite{RES87}, we have the following
lemma. 
\begin{lemma}\label{lem:310}
Assume that $f:\mathbb{\widetilde{O}}\to [0,1]$ belongs to $BM(\mathbb{\widetilde{S}})$. Then for $i\ge 1$, 
\begin{equation}
\label{eq:prod1}
E \Big(f(K_i,X_i)\mid (X_n)_{n\ge
  1}\Big)=\int_{\mathbb{K}}f(y,X_i)\,G(dy,X_i)\,, \quad a.s.\,,
\end{equation}
and
\begin{equation}
\label{eq:prod2}
E \Big(\prod_{i=1}^{\infty}f(K_i,X_i)\mid (X_n)_{n\ge 1}
\Big)=\prod_{i=1}^{\infty}E\Big(f(K_i,X_i)\mid (X_n)_{n\ge 1}\Big)\,,
\quad a.s.\,.
\end{equation}
\end{lemma}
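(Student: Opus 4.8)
The plan is to reduce both identities to standard properties of conditional expectations given the conditional-independence assumption \eqref{eq:condindep}. For \eqref{eq:prod1}, I would first verify it for indicator functions $f=\1_{F\times A}$ with $F\in\mathcal{K}$ and $A\in\mathcal{B}(\mathbb{C}^r)$: on the event $\{X_i\in A\}$ (which is $\sigma((X_n)_{n\ge1})$-measurable), the left side equals $P(K_i\in F\mid (X_n)_{n\ge1})=G(X_i,F)$ by \eqref{eq:condindep}, while the right side equals $G(X_i,F)\1_A(X_i)$; on the complement both sides vanish. Since such product sets form a $\pi$-system generating $\mathcal{\widetilde{S}}_{\mathbb{\widetilde{O}}}$, a monotone-class / Dynkin argument extends \eqref{eq:prod1} from indicators to non-negative simple functions by linearity, and then to arbitrary measurable $f:\mathbb{\widetilde{O}}\to[0,1]$ in $BM(\mathbb{\widetilde{S}})$ by monotone convergence (approximating $f$ from below by simple functions and using the conditional monotone convergence theorem on both sides; boundedness by $1$ guarantees integrability).

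For \eqref{eq:prod2}, the key input is that, conditionally on $(X_n)_{n\ge1}$, the random elements $(K_i)_{i\ge1}$ are independent — this is exactly the content of \eqref{eq:condindep}, which states that conditioning additionally on $(K_j)_{j\ne n}$ does not change the conditional law of $K_i$. I would first establish the finite-product version
\begin{equation*}
E\Big(\prod_{i=1}^{m}f(K_i,X_i)\;\Big|\;(X_n)_{n\ge1}\Big)=\prod_{i=1}^{m}E\big(f(K_i,X_i)\mid (X_n)_{n\ge1}\big)\,,\quad a.s.,
\end{equation*}
by induction on $m$: pull out the last factor using the tower property with the $\sigma$-field generated by $(X_n)_{n\ge1}$ and $(K_j)_{j<m}$, apply conditional independence to replace $E(f(K_m,X_m)\mid (X_n),(K_j)_{j<m})$ by $E(f(K_m,X_m)\mid (X_n))$ (which is $\sigma((X_n))$-measurable, hence pulls out of the outer conditional expectation), and invoke the induction hypothesis. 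Then let $m\to\infty$: since $0\le f\le 1$, the partial products $\prod_{i=1}^{m}f(K_i,X_i)$ decrease to $\prod_{i=1}^{\infty}f(K_i,X_i)$, so conditional dominated (or monotone) convergence gives the left side, and the infinite product on the right converges by monotonicity of the partial products as well.

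The main obstacle is the passage to the countably infinite product and, more subtly, making the conditional-independence step in \eqref{eq:condindep} rigorous: the displayed identity \eqref{eq:condindep} conditions on $(K_j)_{j\ne n}$, but in the induction I only need conditioning on the finitely many $(K_j)_{j<m}$, which follows by a further application of the tower property (averaging out $(K_j)_{j>m}$). I would also need to confirm measurability of $x\mapsto\int_{\mathbb{K}}f(y,x)\,G(dy,x)$ so that the right-hand sides are genuinely $\sigma((X_n)_{n\ge1})$-measurable; this follows from the assumption that $G(F,\cdot)$ is $\mathcal{S}$-measurable together with the same $\pi$-system / monotone-class approximation used for \eqref{eq:prod1}. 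Everything else is a routine application of the conditional monotone and dominated convergence theorems, exactly parallel to Lemmas 3.9 and 3.10 in \cite{RES87}.
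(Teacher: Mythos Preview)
Your proposal is correct and follows essentially the same route as the paper: for \eqref{eq:prod1} both you and the paper verify the identity on product indicators $\1_{F\times A}$, invoke a Dynkin/$\pi$-system argument over the rectangles generating $\mathcal{\widetilde{S}}_{\mathbb{\widetilde{O}}}$, and pass to general $f$ via simple-function approximation and monotone convergence. For \eqref{eq:prod2} the paper is terser (it checks product functions $g_1(y)g_2(x)$ and then says ``similar arguments as above''), whereas you spell out the induction on finite products plus the $m\to\infty$ limit; this is a harmless difference in presentation rather than in strategy.
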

The following proposition is proved by using Laplace functional and
the proof is similar to the proof of Proposition 3.8 in
\cite{RES87}. Therefore, the proof is omitted. 
\begin{proposition}
  Let $\mathbb{\widetilde{S}}=(\mathbb{S},\mathbb{J})$ with $K$ as the
  transition function be the space defined before
  Lemma~\ref{lem:310}. Suppose that  
\begin{equation*}
N= \sum_{i}\delta_{X_i}
\end{equation*}
is PRM($\mu$) on $(\mathbb{S},d_1)$. Then 
\begin{equation*}
N^{*}=\sum_{i}\delta_{(X_i,K_i)}
\end{equation*}
is PRM on $\mathbb{\widetilde{S}}$ with mean measure
\begin{equation*}
\mu^{*}(dx, dy)=\mu(dx)G(x,dy)\,.
\end{equation*}
\end{proposition}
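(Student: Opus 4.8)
The plan is to identify $N^{*}$ as PRM($\mu^{*}$) through its Laplace functional and the converse part of Proposition~\ref{prop:prm}: it suffices to prove that for every $f\in BM(\widetilde{\mathbb{S}})$ one has $L_{N^{*}}[f]=\exp\bigl(-\int_{\widetilde{\mathbb{O}}}(1-e^{-f})\,d\mu^{*}\bigr)$ with $\mu^{*}(dx,dy)=\mu(dx)G(x,dy)$. First I would check that the objects involved are well posed. Since $G(x,\cdot)$ is a probability measure for every $x$ and (recall) $\widetilde{d}((x,y),\widetilde{\mathbb{C}})=d_{1}(x,\mathbb{C})$, for every $r>0$
\[ \mu^{*}\bigl(\widetilde{\mathbb{S}}\setminus\widetilde{\mathbb{C}}^{r}\bigr)=\int_{\mathbb{S}\setminus\mathbb{C}^{r}}G(x,\mathbb{K})\,\mu(dx)=\mu\bigl(\mathbb{S}\setminus\mathbb{C}^{r}\bigr)<\infty, \]
so $\mu^{*}\in\mathbb{M}_{\widetilde{\mathbb{O}}}$; similarly $N^{*}(\widetilde{\mathbb{S}}\setminus\widetilde{\mathbb{C}}^{r})=N(\mathbb{S}\setminus\mathbb{C}^{r})<\infty$ almost surely, so $N^{*}$ is almost surely a locally finite counting measure in $\mathbb{N}_{\widetilde{\mathbb{O}}}$; and $N^{*}(A\times F)=\sum_{i}\1_{A}(X_{i})\1_{F}(K_{i})$ is a random variable for every $A\times F$ in the $\pi$-system generating $\widetilde{\mathcal{S}}_{\widetilde{\mathbb{O}}}$, so $N^{*}$ is a point process by Theorem~\ref{thm:randommeasure}.

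Next I would fix $f\in BM(\widetilde{\mathbb{S}})$ with $0\le f\le M$ and $f\equiv0$ on $\widetilde{\mathbb{C}}^{r}$, and condition on $(X_{n})_{n\ge1}$. Because $f(X_{i},K_{i})=0$ unless $X_{i}\in\mathbb{S}\setminus\mathbb{C}^{r}$ and $N=\sum_{i}\delta_{X_{i}}$ has only finitely many points in $\mathbb{S}\setminus\mathbb{C}^{r}$, the quantity $N^{*}(f)=\sum_{i}f(X_{i},K_{i})$ is almost surely a finite sum over the $(X_{n})$-measurable index set $I=\{i:X_{i}\in\mathbb{S}\setminus\mathbb{C}^{r}\}$. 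Using the conditional independence of the $K_{i}$ given $(X_{n})_{n\ge1}$ and the conditional law $E[\phi(K_{i})\mid(X_{n})_{n\ge1}]=\int_{\mathbb{K}}\phi(y)\,G(X_{i},dy)$ for bounded measurable $\phi$ on $\mathbb{K}$ (both consequences of \eqref{eq:condindep}, and the same mechanism underlying Lemma~\ref{lem:310}), the conditional expectation of the resulting finite product factors:
\[ E\bigl[e^{-N^{*}(f)}\mid(X_{n})_{n\ge1}\bigr]=\prod_{i\in I}\int_{\mathbb{K}}e^{-f(y,X_{i})}\,G(X_{i},dy)\qquad\text{a.s.} \]

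Then I would set $h(x)=-\log\int_{\mathbb{K}}e^{-f(y,x)}\,G(x,dy)$ and verify that $h\in BM(\mathbb{S})$: it is nonnegative and bounded by $M$ because $e^{-M}\le\int_{\mathbb{K}}e^{-f(y,x)}\,G(x,dy)\le1$, it is measurable because $G$ is a transition function, and it vanishes on $\mathbb{C}^{r}$ because there $f(\cdot,x)\equiv0$. Since $h(X_{i})=0$ for $i\notin I$, the product above equals $\exp\bigl(-\sum_{i}h(X_{i})\bigr)=e^{-N(h)}$, so taking expectations over $(X_{n})_{n\ge1}$, i.e.\ over $N$, gives $L_{N^{*}}[f]=E[e^{-N(h)}]=L_{N}[h]$. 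As $N$ is PRM($\mu$), Proposition~\ref{prop:prm} yields $L_{N}[h]=\exp\bigl(-\int_{\mathbb{O}}(1-e^{-h(x)})\,\mu(dx)\bigr)$, and since $G(x,\cdot)$ is a probability measure,
\[ 1-e^{-h(x)}=1-\int_{\mathbb{K}}e^{-f(y,x)}\,G(x,dy)=\int_{\mathbb{K}}\bigl(1-e^{-f(y,x)}\bigr)\,G(x,dy), \]
so by Fubini's theorem for the kernel $G$ the exponent becomes $\int_{\mathbb{O}}\int_{\mathbb{K}}(1-e^{-f(y,x)})\,G(x,dy)\,\mu(dx)=\int_{\widetilde{\mathbb{O}}}(1-e^{-f})\,d\mu^{*}$. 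This is the identity stated at the start, and the converse part of Proposition~\ref{prop:prm} then concludes that $N^{*}$ is PRM($\mu^{*}$).

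The step I expect to be the main obstacle is the conditional factorization of the Laplace functional. The factorization identities in Lemma~\ref{lem:310} are stated for functions in $BM(\widetilde{\mathbb{S}})$, while $e^{-f}\equiv1$ near $\widetilde{\mathbb{C}}$ and so is not of that form; the remedy is to first reduce $e^{-N^{*}(f)}$ to an almost surely finite product over $I$ and then apply the conditional-law identity fibrewise to the bounded measurable functions $y\mapsto e^{-f(y,X_{i})}$ on $\mathbb{K}$. The accompanying routine point is checking $h\in BM(\mathbb{S})$, which is what makes Proposition~\ref{prop:prm} applicable to $L_{N}[h]$.
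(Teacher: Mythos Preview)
Your proposal is correct and follows exactly the Laplace-functional route the paper indicates (the paper omits the proof, pointing to Proposition~3.8 in \cite{RES87}): condition on $(X_n)$, factor the product via the conditional independence underlying Lemma~\ref{lem:310}, recognise the result as $L_N[h]$ with $h(x)=-\log\int_{\mathbb{K}}e^{-f(y,x)}\,G(x,dy)\in BM(\mathbb{S})$, apply Proposition~\ref{prop:prm} to $N$, and rewrite the exponent with Fubini. Your observation that Lemma~\ref{lem:310} as stated applies to functions vanishing near $\widetilde{\mathbb{C}}$ (hence not directly to $e^{-f}$) and your workaround via the a.s.\ finite product over $I$ are exactly the right way to make the step rigorous in this setting.
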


\section{Regular variation}
\label{sec:regular-variation}
For $\tau\in \mathbb{R}$, let $\mathcal{R}_{\tau}$ denote the class of regularly
varying functions at infinity with index $\tau$, i.e., positive,
measurable functions $g$ defined in a neighbourhood of infinity such
that $\lim_{u\to \infty}g(\lambda u)/g(u)=\lambda^{\tau}$ for every
$\lambda\in (0,\infty)$. 
\begin{definition}\label{def:rv}
A sequence of measures $(\nu_n)_{n\ge 1}$ in $\mathbb{M}_{\mathbb{O}}$
is \emph{regularly varying} if there exists a nonzero $\mu\in
\mathbb{M}_{\mathbb{O}}$ and a regularly varying function $b$ with
index $-\alpha^{-1}$, $\alpha>0$ such that $t\nu\big(b(t)\cdot\big)\to
\mu$ in $\mathbb{M}_{\mathbb{O}}$ as $t\to \infty$. 
\end{definition}
The limiting measure $\mu$ in Definition~\ref{def:rv} has the
homogeneity property
\begin{align}\label{eq:homo}
\mu(\lambda A)=\lambda^{-\alpha}\mu(A)\,, \quad
  \alpha>0\,,\lambda>0\,, A\in \mathcal{S}_{\mathbb{O}}\,. 
\end{align}
This property is proved by Theorem 3.1 in
\cite{Lindskog:2014ia}. Similarly, a random element $X$ in
$\mathbb{S}$ is regularly varying with index $-\alpha$ if and only if
there exists a 
nonzero $\mu\in \mathbb{M}_{\mathbb{O}}$ and a regularly varying
function $b$ with index $-\alpha^{-1}$, $\alpha>0$ such that 
\begin{align*}
tP(X\in b(t)\cdot)\to \mu(\cdot)\,, \quad t\to \infty\,.
\end{align*}
The measure $\mu$ on the right-hand side must have the property
\eqref{eq:homo}. 

Consider an iid sequence $(X_n)_{n\ge 1}$, where $X_{n}$ is regularly
varying with index $-\alpha<0$ for each $n$. Define the point process
\begin{align}\label{eq:ppp}
N_n=\sum_{i=1}^n\delta_{(i/n,X_i/b(n))}\,.
\end{align}
It is well-known that regular variation of each $X_i$ is equivalent to
the convergence of the point process \eqref{eq:ppp} towards a suitable
PRM; see Proposition 3.21, \cite{RES87}. The following theorem is a
reproduction of this result under
$\mathbb{M}_{\mathbb{O}}$-convergence. 
\begin{theorem}\label{thm:iidmain}
  Let $(X_n)_{n\ge 1}$ be an iid copies of a random element $X$ taking
  values in $\mathbb{S}$ and a measure $\mu\in
  \mathbb{M}_{\mathbb{O}}$. Suppose that there exists an increasing
  regularly varying function $b$ with index $-\alpha^{-1}$,
  $\alpha>0$. A point process $N_n$ is defined as \eqref{eq:ppp} and
  the process $N$ is PRM on $[0,\infty)\times \mathbb{O}$ with mean
  measure $dt\times d\mu$. Then, $N_n\overset{w}{\to} N$ if and only
  if 
\begin{align}
\label{eq:mainiid}
tP(X/b(t)\in \cdot)\to \mu\,, \quad t\to \infty\,, \text{ in }
  \mathbb{M}_{\mathbb{O}}\,. 
\end{align}
\end{theorem}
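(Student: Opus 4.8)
The plan is to use the characterization of weak convergence of point processes via Laplace functionals (Corollary~\ref{cor:lp}), reducing the theorem to a pointwise convergence statement about $L_{N_n}[f]$ for $f\in\mathcal{C}_{\mathbb{O}}$ on the product space $[0,\infty)\times\mathbb{O}$. First I would fix $f\in\mathcal{C}_{\mathbb{O}}$; by definition $f$ vanishes on a neighborhood of $[0,\infty)\times\mathbb{C}$, so there is $r>0$ with $f(t,x)=0$ whenever $d(x,\mathbb{C})<r$, and $f$ is bounded, say by $c$. Using the iid structure and the fact that the points $(i/n,X_i/b(n))$ are independent,
\begin{align*}
L_{N_n}[f]=E\exp\Big(-\sum_{i=1}^n f\big(i/n,X_i/b(n)\big)\Big)
=\prod_{i=1}^n E\exp\big(-f(i/n,X_i/b(n))\big)
=\prod_{i=1}^n\Big(1-E\big[1-e^{-f(i/n,X_i/b(n))}\big]\Big).
\end{align*}
Write $g=1-e^{-f}$, which is nonnegative, bounded by $1$, continuous, and also vanishes on $\{d(x,\mathbb{C})<r\}$, hence $g(t,\cdot)\in\mathcal{C}_{\mathbb{O}}$ uniformly in $t$. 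The standard $\log(1-u)\approx -u$ estimate, valid since each $E[1-e^{-f(i/n,X_i/b(n))}]\le$ (small) $\cdot\,nP(X/b(n)\in\mathbb{S}\setminus\mathbb{C}^r)/n\to 0$ uniformly in $i$, reduces the problem to showing
\begin{align*}
\sum_{i=1}^n E\,g\big(i/n,X_i/b(n)\big)\;\longrightarrow\;\int_0^\infty\!\!\int_{\mathbb{O}} g(t,x)\,dt\,\mu(dx)\,,\qquad n\to\infty\,.
\end{align*}
Since the $X_i$ are identically distributed, the left side equals $\sum_{i=1}^n \int_{\mathbb{O}} g(i/n,x)\,P(X/b(n)\in dx) = n\int_{\mathbb{O}}\Big(\tfrac1n\sum_{i=1}^n g(i/n,x)\Big)P(X/b(n)\in dx)$. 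The inner Riemann sum converges, as $n\to\infty$, uniformly in $x$ (because $g$ is uniformly continuous on the compact-in-first-coordinate set where it is supported, being continuous and bounded with $t$-support effectively $[0,1]$) to $\int_0^1 g(t,x)\,dt=\int_0^\infty g(t,x)\,dt$; combined with the hypothesis $nP(X/b(n)\in\cdot)\to\mu$ in $\mathbb{M}_{\mathbb{O}}$ and the fact that $x\mapsto\int_0^\infty g(t,x)\,dt$ lies in $\mathcal{C}_{\mathbb{O}}$, one gets the displayed limit. This direction uses only the "if" hypothesis.

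For the converse ("only if"), assume $N_n\overset{w}{\to}N$. Fix $h\in\mathcal{C}_{\mathbb{O}}(\mathbb{S})$ on the state space $\mathbb{S}$ alone and apply the forward computation to $f(t,x)=h(x)$ (note $f\in\mathcal{C}_{\mathbb{O}}$ on the product space): the argument above shows $L_{N_n}[f]=\exp\big(-n E[1-e^{-h(X/b(n))}]+o(1)\big)$, while weak convergence forces $L_{N_n}[f]\to L_N[f]=\exp\big(-\int_0^\infty\!\!\int_{\mathbb{O}}(1-e^{-h(x)})\,dt\,\mu(dx)\big)=\exp\big(-\int_{\mathbb{O}}(1-e^{-h(x)})\,\mu(dx)\big)$. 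Hence $n E[1-e^{-h(X/b(n))}]=n\int_{\mathbb{O}}(1-e^{-h(x)})P(X/b(n)\in dx)\to\int_{\mathbb{O}}(1-e^{-h(x)})\,\mu(dx)$ for every such $h$; since functions of the form $1-e^{-h}$ with $h\in\mathcal{C}_{\mathbb{O}}$ are (uniform approximants of) a determining class inside $\mathcal{C}_{\mathbb{O}}$, this yields $nP(X/b(n)\in\cdot)\to\mu$ in $\mathbb{M}_{\mathbb{O}}$ along integers $n$, and a routine monotonicity/regular-variation interpolation argument (using that $b$ is increasing and regularly varying, so $b(\lfloor t\rfloor)/b(t)\to 1$) upgrades this to the continuous limit $tP(X/b(t)\in\cdot)\to\mu$.

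The main obstacle I anticipate is the uniform control needed in two places: first, justifying the $\log(1-u)\sim -u$ linearization of the product $\prod_i(1-E[1-e^{-f(i/n,X_i/b(n))}])$ uniformly over $i=1,\dots,n$, which requires the uniform bound $\max_i E[1-e^{-f(i/n,X_i/b(n))}]\le \big(1-e^{-c}\big)P(X/b(n)\in\mathbb{S}\setminus\mathbb{C}^r)\to 0$ together with $\sum_i E[1-e^{-f(i/n,X_i/b(n))}]$ staying bounded — both of which follow from $f$ vanishing near $\mathbb{C}$ and the $\mathbb{M}_{\mathbb{O}}$-convergence hypothesis applied to the set $\mathbb{S}\setminus\mathbb{C}^r$ (a stochastic continuity set, by Lemma~\ref{lem:fidifinite}, for all but countably many $r$); and second, the uniform-in-$x$ convergence of the Riemann sum $\frac1n\sum_{i=1}^n g(i/n,x)\to\int_0^1 g(t,x)\,dt$, which needs uniform continuity of $g$ in $t$ — this holds because $g$ is continuous, bounded, and its dependence on $t$ only matters on $t\in[0,1]$ where the sampling points live, so $g$ restricted there is uniformly continuous in $t$ uniformly over $x$ in the support. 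Once these uniformity points are dispatched, the rest is bookkeeping with Corollary~\ref{cor:lp} and Proposition~\ref{prop:prm}.
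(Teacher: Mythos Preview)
Your strategy is exactly what the paper prescribes: its entire proof is the remark that one should apply Corollary~\ref{cor:lp} and follow the lines of Proposition~3.21 in \cite{RES87}, and that is precisely the Laplace-functional factorization and $\log(1-u)\sim -u$ linearization you carry out.

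There is, however, one genuine gap, and it shows up in both directions. On the product space the cone is $[0,\infty)\times\mathbb{C}$, so membership in $\mathcal{C}_{\mathbb{O}}$ imposes no decay in the $t$-variable. In the converse you take $f(t,x)=h(x)$, which then yields
\[
\int_0^{\infty}\!\!\int_{\mathbb{O}}\bigl(1-e^{-h(x)}\bigr)\,dt\,\mu(dx)=+\infty,
\]
so $L_N[f]=0$, not $\exp\bigl(-\int(1-e^{-h})\,d\mu\bigr)$ as you wrote. The same issue lies behind your unjustified identity $\int_0^1 g(t,x)\,dt=\int_0^{\infty} g(t,x)\,dt$ in the forward direction: nothing forces a generic $g\in\mathcal{C}_{\mathbb{O}}$ on the product to have $t$-support in $[0,1]$. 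The remedy, implicit in Resnick's vague-convergence setting where test functions have compact support, is to localize in time. Since every point of $N_n$ has first coordinate in $(0,1]$, it suffices to test against functions with $t$-support in $[0,1]$; alternatively, run the converse through fidi distributions (Theorem~\ref{thm:fidiconvergence}) on sets $(a,b]\times A$ with $(a,b]\subset(0,1]$ and $A$ a $\mu$-continuity set bounded away from $\mathbb{C}$, where the binomial-to-Poisson limit gives $nP(X/b(n)\in A)\to\mu(A)$ directly. With that localization in place, your Riemann-sum and linearization steps go through and the rest of the argument stands.
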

The proof is simply an application of Laplace functionals. By using
Corollary~\ref{cor:lp}, it is easy to prove following the line of the
proof of Proposition 3.21, \cite{RES87}.
\section{Discussions}
\label{sec:discussions}
The choice of the cone $\mathbb{C}$ is vital to the definition of the
space $\mathbb{M}_{\mathbb{O}}$. This topic is already covered by
various literatures in different contexts; see 
\cite{Janssen:2016ki,Kulik:2015cz, Das:2016dd,Das:2016to}, to name a few.  

It is proven in \cite{Lindskog:2014ia} that
$\mathbb{M}_{\mathbb{O}}$-convergence implies vague convergence and
$\mathbb{M}_{\mathbb{O}}\subset \mathbb{M}_+(\mathbb{S}\setminus
\mathbb{C})$. The proof is based on the fact that compact sets in
$\mathcal{S}_{\mathbb{O}}$ are bounded away from $\mathbb{C}$. The
application of vague convergence requires that the space $\mathbb{S}$
is locally compact. For this reason, it is tempting to replace vague
convergence with the $w^{\#}$-convergence in extreme value theory. But
the measures in $\mathbb{M}_{\mathbb{O}}$ does not belong to
$\mathbb{M}_{\#}$, which are the measures frequently used in extreme
value theory. One toy example is to take $\mathbb{S}=\mathbb{R}$ and
$0_S=0$. Find a measure $\mu\in
\mathbb{M}_{\mathbb{O}}$ such that $\mu(\{x:|x|>1/r\})=r$ for all
$r>0$. Note that the set 
$B_r(0)\setminus\{0\}\in
\mathcal{S}_{\mathbb{O}}$ for any fixed $r$ is bounded and $\mu(B_r(0)\setminus
\{0\})>M$ for any $M>0$. We have $\mu\notin
\mathbb{M}_{\#}(\mathbb{O})$ and hence $\mathbb{M}_{\mathbb{O}}$-convergence
does not imply $w^{\#}$-convergence.
\section{Proofs}
\label{sec:proof}
\begin{proof}[Proof of Proposition~\ref{prop:algebra}]

The proof is a modification of Proposition A2.5.I and Theorem A2.6.III,
\cite{Daley:2003tf} under weaker conditions. 

Notice that $\mathbb{S}\setminus \mathbb{C}^{r} $ is close and hence complete and
separable. If $\mu\in \mathbb{M}_{\mathbb{O}}$, the measure
$\mu^{(r)}$ is in $\mathbb{M}_b(\mathbb{S}\setminus
\mathbb{C}^r)$. The space $(\mathbb{M}_b(\mathbb{S}\setminus
\mathbb{C}^r),p_r)$ is complete and separable, where $p_r$ is the
Prohorov metric. We firstly show that for any $\varepsilon>0$, there
exist $\delta_1,\delta_2>0$ and $r>0$ such that for $\mu\in
\mathbb{M}_{\mathbb{O}}$,
\begin{align}
\label{eq:11} \{\upsilon\in
  \mathbb{M}_{\mathbb{O}}:d_{\mathbb{M}_{\mathbb{O}}}(\mu,\upsilon)<\varepsilon\}&\supset
  \{\upsilon\in
  \mathbb{M}_{\mathbb{O}}:p_r(\mu^{(r)},\upsilon^{(r)})<\delta_1\}\,,\\
 \label{eq:22} \{\upsilon\in
  \mathbb{M}_{\mathbb{O}}:p_r(\mu^{(r)},\upsilon^{(r)})<\epsilon\}&\supset
  \{\upsilon\in
    \mathbb{M}_{\mathbb{O}}:d_{\mathbb{M}_{\mathbb{O}}}(\mu,\upsilon)<\delta_2\}\,.
\end{align}
For $s>r$ and $\mu,\upsilon\in
\mathbb{M}_{\mathbb{O}}$, $p_r(\mu^{(r)},\upsilon^{(r)})\ge
p_s(\mu^{(s)},\upsilon^{(s)})$. Let $y=\varepsilon/(2-\varepsilon)$. Choose
$\delta_1=y/(1-y)$ and $r=\log(1+y)$. Then, if $p_r(\mu^{(r)},\upsilon^{(r)})<\delta_1$
\begin{eqnarray*}
  d_{\mathbb{M}_{\mathbb{O}}}(\mu,\upsilon)\le \int_0^r e^{-x}\,dx+
  \int_r^{\infty}e^{-x}p_r(\mu^{(r)},\upsilon^{(r)})\,dx\le \varepsilon\,.
\end{eqnarray*}
Similarly, if $p_r(\mu^{(r)},\upsilon^{(r)})<\varepsilon$, we can
choose $\delta_2=1-e^{-r}(1+\varepsilon)^{-1}$ and \eqref{eq:22}
holds. According to \eqref{eq:11} and \eqref{eq:22}, it is enough to
consider the open sets in $(\mathbb{M}_b(\mathbb{S}\setminus \mathbb{C}^r), p_r)$
for $r>0$, which generates $\mathcal{M}_{\mathbb{O}}$. In what
follows, the connection between the functions $\Phi_A$ with $A\in
\mathcal{B}(\mathbb{S}\setminus \mathbb{C}^r)$ for some $r>0$ and the open sets in
$(\mathbb{M}_b(\mathbb{S}\setminus \mathbb{C}^r), p_r)$ will be explored. 

Let $F\subset \mathbb{S}\setminus \mathbb{C}^r$ be a closed
set. Choose $y>0$ and find $\mu\in \mathbb{M}_{\mathbb{O}}$ such that for some
$y>0$, $\mu(F)<y$ and let $\varepsilon=y-\mu(F)$. Recall that the
measures in $\mathbb{M}_b(\mathbb{S}\setminus \mathbb{C}^r)$ are
totally finite measures. According to Proposition A2.5.I,
\cite{Daley:2003tf}, the set 
\begin{equation}\label{eq:openset}
  D=\{\upsilon\in \mathbb{M}_{\mathbb{O}}: \Phi_F(\upsilon)<y\}=\{\upsilon\in
  \mathbb{M}_{\mathbb{O}}: \upsilon^{(r)}(F)<\mu^{(r)}(F)+\varepsilon\}
\end{equation} 
is an open set, that is, for any $\upsilon\in D$, there exists
$\delta>0$ such that
\begin{eqnarray*}
\{\widetilde{\upsilon}:p_r(\widetilde{\upsilon}^{(r)},\upsilon^{(r)})<\delta\}\subset
  D\,,
\end{eqnarray*}
and by \eqref{eq:22}, there exists $\delta^{\prime}_2>0$ such that 
\begin{eqnarray*}
  \{\widetilde{\upsilon}:d_{\mathbb{M}_{\mathbb{O}}}(\widetilde{\upsilon},
  \upsilon)<\delta^{\prime}_2\} \subset D\,.  
\end{eqnarray*}
Consequently, $D$ is an open set in $\mathbb{M}_{\mathbb{O}}$ and hence
measurable. 

Let $\mathcal{A}$ be the collection of sets $A\in
\mathcal{S}_{\mathbb{O}}$ for which $\Phi_A$ is
$\mathcal{M}_{\mathbb{O}}$-measurable. We will show that $\mathcal{A}$
agrees with $\mathcal{S}_{\mathbb{O}}$. Trivially, $\mathcal{A}\subset
\mathcal{S}_{\mathbb{O}}$. It remain to prove
$\mathcal{S}_{\mathbb{O}}\subset \mathcal{A}$. According to the
discussions above, $\mathcal{A}$ contains all the closed set $F$
bounded away from $\mathbb{C}$, that is, $F\subset \mathbb{S}\setminus
\mathbb{C}^r$ for some $r>0$. Moreover, for any $A,B\in
\mathcal{S}_{\mathbb{O}}$ bounded away from $\mathbb{C}$, the
intersection $A\cap B$ is also bounded away from $\mathbb{C}$. This
shows that the sets bounded away from $\mathbb{C}$ forms a
$\pi$-system. Moreover, $\mathcal{C}$ generates
$\mathcal{S}_{\mathbb{O}}$. By the definition of $\Phi$, $\Phi_{A\cup
  B}=\Phi_A+\Phi_B$ if $A$ and $B$ 
are disjoint and $\Phi_{A\setminus B}=\Phi_A-\Phi_B$ if $B\subset
A$. Since
\begin{align*}
A\cap B=(A\cup B)\setminus \big(((A\cup B)\setminus A)\cup (A\cup
  B)\setminus B\big)\,,
\end{align*}
we have $\Phi_{A\cap B}=\Phi_A+\Phi_B-\Phi_{A\cup B}$. This implies
that $\mathcal{D}\subset \mathcal{A}$. For $A\in
\mathcal{S}_{\mathbb{O}}$, let $A^{(r_n)}=A\setminus
\mathbb{C}^{r_n}$ with $r_n\downarrow 0$ as $n\to \infty$. The
sequence of functions $(\Phi_{A^{(r_n)}})$ is a 
non-decreasing sequence of measurable functions and for every $\mu\in
\mathbb{M}_{\mathbb{O}}$, $\lim_{n\to
  \infty}\Phi_{A^{(r_n)}}(\mu)=\Phi_A(\mu)$. Lebesgue's monotone
convergence theorem yields that $\Phi_A$ is also measurable. By
choosing $A$ as $\mathbb{O}$, we prove that $\Phi_{\mathbb{O}}$ is
measurable and consequently, $\mathbb{O}\in \mathcal{A}$. Furthermore,
if $A\in \mathcal{A}$, we have $\mathbb{O}\setminus A\in \mathcal{A}$
since $\Phi_{\mathbb{O}\setminus A}=\Phi_{\mathbb{O}}-\Phi_A$ is
measurable. Consider a
sequence of disjoint set $(A_i)_{i\ge 1}$ with $A_i\in
\mathcal{S}_{\mathbb{O}}$. Let $B_{n}=\bigcup_{i=1}^nA_i^{(r_n)}$ and
trivially, the sequence $\Phi_{B_n}$ is non-decreasing with
$\lim_{n\to \infty}\Phi_{B_n}=\Phi_{\cup_i A_i}$ pointwisely. An
application of Lebesgue's monotone convergence theorem ensures that
$\Phi_{\cup_i A_i}$ is measurable and hence, $\bigcup_iA_i\in
\mathcal{A}$. This shows that $\mathcal{A}$ is a
$\lambda$-system. According to Dynkin's $\pi-\lambda$ theorem, 
\begin{align*}
\mathcal{S}_{\mathbb{O}}=\sigma(\mathcal{D})\subset \mathcal{A}\,,
\end{align*}
which means that $\mathcal{S}_{\mathbb{O}}=\mathcal{A}$. 

It remains to show that $\mathcal{M}_{\mathbb{O}}$ is the
smallest $\sigma$-algebra in $\mathbb{M}_{\mathbb{O}}$ with this
property. Let $\mathcal{D}$ be given and let $\mathcal{G}$ be any
$\sigma$-algebra with respect to which $\Phi_A$ is measurable for all
$A\in \mathcal{A}$. It follows from the above arguments that $\Phi_A$
is $\mathcal{G}$-measurable for all $A\in
\sigma(\mathcal{D})=\mathcal{S}_{\mathbb{O}}$. Assume that $\mu\in
\mathbb{M}_{\mathbb{O}}$, a closed set $F\in \mathcal{S}_{\mathbb{O}}$
bounded away from $\mathbb{C}$ and $y>0$ are given. The set
$D$ defined in \eqref{eq:openset} is then a open set of
$\mathbb{M}_{\mathbb{O}}$ and an element of $\mathcal{G}$, which implies that
$\mathcal{G}$ contains a basis for the open sets of
$\mathcal{M}_{\mathbb{O}}$. Since 
$(\mathbb{M}_{\mathbb{O}},d_{\mathbb{M}_{\mathbb{O}}})$ is separable,
any open set of $\mathcal{M}_{\mathbb{O}}$ can be written as a
countable union of basic sets and thus, all open sets are in
$\mathcal{G}$. Therefore, $\mathcal{G}$ contains
$\mathcal{M}_{\mathbb{O}}$, which completes the proof. 
\end{proof}
\par
\begin{proof}[Proof of Lemma~\ref{lem:tight}]
  According to Theorem A2.4.I, \cite{Daley:2003tf} and Theorem 2.5,
  \cite{Lindskog:2014ia}, the set $K\in \mathcal{M}_{\mathbb{O}}$ is
  compact if there exists a sequence $(r_i)$ with $r_i\downarrow 0$
  such that for each $i$ and any $\varepsilon>0$, there exist constant $M_i>0$ and
  compact sets $K_{i,\varepsilon^{\prime}}\subset \mathbb{S}\setminus
  \mathbb{C}^{r_i}$ such that
\begin{align}
\label{eq:compact1}
\sup_{\xi\in K}\xi(\mathbb{S}\setminus \mathbb{C}^{r_i})&<M_i\,,\\
\label{eq:compact2}
\sup_{\xi\in K}\xi(\mathbb{S}\setminus (K_{i,\varepsilon^{\prime}}\cup
  \mathbb{C}^{r_i}))&\le \varepsilon\,.
\end{align}
Suppose that \eqref{eq:tight1} and \eqref{eq:tight2} are
satisfied. From \eqref{eq:tight1}, we choose $\widetilde{M}_i$ such
that 
\begin{equation*}
P_t \Big(\xi(\mathbb{S}\setminus \mathbb{C}^{r_i}) >\widetilde{M}_i\Big)<\varepsilon/2^{n+1}\,,
\end{equation*}
and from \eqref{eq:tight2}, we choose the compact set
$\widetilde{K}_{ij}\subset \mathbb{S}\setminus \mathbb{C}^{r_i}$ such
that 
\begin{equation*}
P_t \Big(\xi(\mathbb{S}\setminus (\widetilde{K}_{ij}\cup \mathbb{C}^{r_i}))>j^{-1} \Big)<\varepsilon/2^{m+n+2}\,.
\end{equation*}
Define the sets, for $i,j\ge 1$,
\begin{align*}
&Q_i=\{\xi:\xi(\mathbb{S}\setminus \mathbb{C}^{r_i})\le \widetilde{M}_i\}\,,\\ 
&Q_{i,j}=\{\xi:\xi(\mathbb{S}\setminus (\widetilde{K}_{ij}\cup
  \mathbb{C}^{r_i})\le j^{-1})\}\,.
\end{align*}
Let $K=\bigcap_{i=1}^{\infty}\bigcap_{j=1}^{\infty}(Q_i\cap Q_{i,j})$
and \eqref{eq:compact1} and \eqref{eq:compact2} are satisfied by
construction. Hence, $K$ is compact and 
\begin{equation*}
P_t(K^c)\le \sum_{i=1}^{\infty}
\Big[P_t(Q_i^c)+\sum_{j=1}^{\infty}P_t(Q_{i,j}^c) \Big]\le
\sum_{i=1}^{\infty}\Big[\frac{\varepsilon}{2^{n+1}}+\sum_{j=1}^{\infty}\frac{\varepsilon}{2^{m+n+2}}\Big]=\varepsilon\,. 
\end{equation*}
Thus, $(P_t)_{t\in \mathcal{T}}$ are uniformly tight. 

Suppose that the measures $(P_t)_{t\in \mathcal{T}}$ are uniformly
tight. For a given $\varepsilon>0$, a compact set $K\in
\mathcal{M}_{\mathbb{O}}$ exists and consequently, there exists a
sequence $(r_i)$ with $r_i\downarrow 0$ such that for each $i$ and any $\varepsilon>0$,
there exist constants $M_i>0$ and compact sets $K_{i,\varepsilon^{\prime}}\subset
\mathbb{S}\setminus \mathbb{C}^{r_i}$ such that for all $\xi\in K$,
\eqref{eq:compact1} and \eqref{eq:compact2} hold. Consequently,
\eqref{eq:tight1} and \eqref{eq:tight2} are satisfied. 
\end{proof}
\par
\begin{proof}[Proof of Theorem~\ref{thm:fidiconvergence}]
Lemma~\ref{lem:wc} have proved the first part of the theorem. Now we need to
prove the other part. Since a probability measure is uniquely determined
by the class of all fidi distributions, it suffices to show that the
family $(P_n)$ is uniformly tight, which implies that if the fidi
distributions of a subsequence of $(P_n)$ converges weakly to those of
$P$, then all convergent subsequence have the same limit and the whole
sequence $(P_n)$ converges weakly to $P$. 

We then need to check that \eqref{eq:tight1} and \eqref{eq:tight2} are
satisfied by using the assumption of the convergence of fidi
distributions. By Lemma~\ref{lem:fidifinite}, a sequence $(r_i)_{i\ge
  1}$ exists such that $r_i\downarrow 0$ and for all $i$,
$\mathbb{C}^{r_i}$ are stochastic continuity sets for all $P_n$, $n\ge
1$ and $P$. For a given $i$ and $\varepsilon>0$, we can find a constant $M>0$
that is continuity point for the distribution $\xi(\mathbb{S}\setminus
\mathbb{C}^{r_i})$ and for which $P(\xi(\mathbb{S}\setminus
\mathbb{C}^{r_i})>M)\le \varepsilon/2$ and $P_n(\xi(\mathbb{S}\setminus
\mathbb{C}^{r_i})>M)\to P(\xi(\mathbb{S}\setminus
\mathbb{C}^{r_i}))$. For some $N>0$ and $n>N$,
$P_n(\xi(\mathbb{S}\setminus \mathbb{C}^{r_i})>M)<\varepsilon$ and
then we can ensure that \eqref{eq:tight1} holds for all $n$ by
increasing $M$.

Since $(\mathbb{S},d)$ is complete and separable and the sets
$\mathbb{S}\setminus \mathbb{C}^{r_i}$, $i\ge 1$, are closed, the
subspaces $(\mathbb{S}\setminus \mathbb{C}^{r_i},d)$ are also complete
and separable. For each $i$, we can find a countable dense set
$D_i=\{x_j\,, j\ge 1\}$ of $\mathbb{S}\setminus \mathbb{C}^{r_i}$. By
Lemma~\ref{lem:fidifinite}, for any $j,l\ge 1$, we can find a
neighborhood of $x_j$, $B_{jl}=B_{r_l}(x_j)$ such that $r_j\le
\min\{d(x_j,\mathbb{O}),2^{-l}\}$ and $B_{jl}$ is a stochastic
continuity set for $P$. Trivially, 
\[\xi \Big(\bigcup_{j=1}^KB_{jl} \Big)\uparrow \xi(\mathbb{S}\setminus
  \mathbb{C}^{r_i})\,. \]
Given $\varepsilon,\varepsilon^{\prime}>0$, we can choose $K_l$ such that 
\begin{equation*}
P \Big(\xi(\mathbb{S}\setminus (\mathbb{C}^{r_i}\cup B_l))\ge \varepsilon^{\prime}_j
\Big)\le \varepsilon/2^{j+1}\,,
\end{equation*}
where $B_l=\cup_{j=1}^{K_l}B_{jl}$ and $\varepsilon^{\prime}_j\le
\varepsilon^{\prime}/2^j$ is chosen to be a continuity point of the
distribution of $\xi(\mathbb{S}\setminus (\mathbb{C}^{r_i}\cup B_l))$
under $P$. Using the weak convergence of the fidi distribution and
increasing the value of $K_l$ if necessary, we can ensure that for all
$n$,
\[P_n \Big(\xi(\mathbb{S}\setminus (\mathbb{C}^{r_i}\cup B_l))\ge
  \varepsilon^{\prime}_j \Big)\le \varepsilon/2^j\,. \]
Define $K_i=\bigcap_{l=1}^{\infty}B_{l}$. By construction, $K_{i}$ is closed
and it can be covered by a finite number of $\varepsilon$-spheres for
every $\varepsilon>0$. Consequently, $K_i$ is compact. Moreover, for
every $n$, 
\begin{align*}
P_n \Big(\xi(\mathbb{S}\setminus \mathbb{C}^{r_i})-\xi(K_i)>\varepsilon^{\prime}
  \Big)&=P_n \Big(\xi \Big(\bigcup_{l=1}^{\infty}(\mathbb{S}\setminus
         (\mathbb{C}^{r_i}\cup B_l)) \Big)>\varepsilon^{\prime} \Big)\\
&\le \sum_{l=1}^{\infty}P_n \Big(\xi(\mathbb{S}\setminus
         (\mathbb{C}^{r_i}\cup B_l))>\varepsilon^{\prime}/2^j \Big)\\ 
&\le \sum_{l=1}^{\infty}P_n \Big(\xi(\mathbb{S}\setminus
         (\mathbb{C}^{r_i}\cup B_l))>\varepsilon^{\prime}_j \Big)\\ 
&\le \sum_{j=1}^{\infty}\frac{\varepsilon}{2^j}=\varepsilon\,,  
\end{align*}
and thus \eqref{eq:tight2} holds. This completes the proof. 
\end{proof}
\par
\begin{proof}[Proof of Lemma~\ref{lem:310}]
  Let 
\[\mathcal{A}=\{f:f\in BM(\mathbb{\widetilde{S}}),\; f \text{
    satisfies \eqref{eq:prod1}}\}\,.\]
Think of the functions of the form $f(y,x)=f_1(y)f_2(x)\in [0,1]$
where $f_1$ is non-negative and $\mathcal{K}$-measurable and $f_2\in
BM(\mathbb{S})$. An application of \eqref{eq:condindep} yields
that
\begin{eqnarray*}
E\Big(f(K_i,X_i)\mid (X_n)_{n\ge 1}\Big) 
&= & E\Big(f_1(K_i)f_1(X_i)\mid (X_n)_{n\ge 1} \Big)\\
& =&f_2(X_i)E  \Big(f_1(K_i)\mid (X_n)_{n\ge 1}\Big)\\ 
 &= & f_2(X_i) \int_{\mathbb{K}}f_1(y)\,G(dy, X_i)\\ 
 & =& \int_{\mathbb{K}}g(y,X_i)\,G(dy,X_i)\,.
\end{eqnarray*}
This implies that the indicator function
$\mathbf{1}_{F_i\times A_i}=\mathbf{1}_{F_i}\mathbf{1}_{A_i}$ belongs
to $\mathcal{A}$, where 
$A_i\in \mathcal{S}_{\mathbb{O}}$ bounded away from $\mathbb{C}$ and
$F_i\in \mathcal{K}$. 

Fix $r>0$. Let $BM(\mathbb{\widetilde{S}}\setminus \mathbb{\widetilde{C}}^r)$ be a collection of functions $g\in \mathcal{A}$
such that $g$ vanishes on $\mathbb{\widetilde{C}}^r$ and let
$\mathcal{E}^r=\{G\in \mathcal{\widetilde{S}}_{\mathbb{\widetilde{O}}}:\mathbf{1}_G\in
BM(\mathbb{\widetilde{S}}\setminus
\mathbb{\widetilde{C}}^r)\}$. Following the arguments
in the proof of Proposition~\ref{prop:algebra}, 
it is trivial to show that $\mathcal{E}^r$ is a $\lambda$-system
containing the $\pi$-system of rectangles $F\times A$ with $A\in
\mathcal{B}(\mathbb{S}\setminus \mathbb{C}^r)$ and $F\in
\mathcal{K}$. Dynkin's theorem shows that $\mathcal{E}^r\supset
\mathcal{B}(\mathbb{\widetilde{S}}\setminus
\mathbb{\widetilde{C}}^r)$. Moreover, the simple function
\begin{equation}\label{eq:sf}
\sum_{i=1}^kc_i\mathbf{1}_{\widetilde{F}_i}\in BM(\mathbb{\widetilde{S}}\setminus \mathbb{\widetilde{C}}^r)\,,
\end{equation}
where $c_i\in [0,1]$ and $\widetilde{F}_i\in \mathcal{B}(\mathbb{S}\setminus
\mathbb{C}^r)\times \mathcal{K}$, $i=1,\ldots, k$. Any measurable function
$f:\mathbb{\widetilde{S}}\setminus \mathbb{\widetilde{C}}^r\to [0,1]$
in $BM(\mathbb{\widetilde{S}}\setminus \mathbb{\widetilde{C}}^r)$ can be
written as the monotone limit of simple functions of the form
\eqref{eq:sf} and Lebesgue's monotone convergence theorem yields that $f\in
\mathcal{A}$. Since $r$ is arbitrary, if $g\in
BM(\mathbb{\widetilde{S}})$, there exists $r>0$ such that $g$ vanishes
on $\mathbb{\widetilde{C}}^r$ and consequently, $g\in \mathcal{A}$. 

For $g\in \mathcal{A}$ of the form $g(y,x)=g_1(y)g_2(x)$, it is easy
to verify that \eqref{eq:prod2} is satisfied. Using similar arguments
as above, it is trivial to show that \eqref{eq:prod2} holds. 
\end{proof}

\bibliographystyle{amsplain}
\bibliography{B-RV}

\providecommand{\bysame}{\leavevmode\hbox to3em{\hrulefill}\thinspace}
\providecommand{\MR}{\relax\ifhmode\unskip\space\fi MR }
\providecommand{\MRhref}[2]{%
  \href{http://www.ams.org/mathscinet-getitem?mr=#1}{#2}
}
\providecommand{\href}[2]{#2}
\begin{thebibliography}{10}

\bibitem{Basrak:2012br}
Bojan Basrak, Danijel Krizmani{\'c}, and Johan Segers, \emph{{A functional
  limit theorem for dependent sequences with infinite variance stable limits}},
  The Annals of Probability \textbf{40} (2012), no.~5, 2008--2033.

\bibitem{BS09}
Bojan Basrak and Johan Segers, \emph{Regularly varying multivariate time
  series}, Stochastic Process. Appl. \textbf{119} (2009), 1055--1080.

\bibitem{Basrak:2016gg}
Bojan Basrak and Azra Tafro, \emph{{A complete convergence theorem for
  stationary regularly varying multivariate time series}}, Extremes \textbf{19}
  (2016), no.~3, 549--560.

\bibitem{BIL95}
Patrick Billingsley, \emph{Probability and measure}, third ed., Wiley Series in
  Probability and Mathematical Statistics, John Wiley \& Sons Inc., New York,
  1995, A Wiley-Interscience Publication. \MR{MR1324786 (95k:60001)}

\bibitem{Daley:2003tf}
D~J Daley and D~Vere-Jones, \emph{{An introduction to the theory of point
  processes. Vol. I}}, second ed., Probability and its Applications (New York),
  Springer-Verlag, New York, 2003.

\bibitem{Daley:2008kf}
\bysame, \emph{{An introduction to the theory of point processes. Vol. II}},
  second ed., Probability and its Applications (New York), Springer, New York,
  New York, NY, 2008.

\bibitem{Das:2016dd}
Bikramjit Das, Abhimanyu Mitra, and Sidney Resnick, \emph{{Living on the
  Multidimensional Edge: Seeking Hidden Risks Using Regular Variation}},
  Advances in Applied Probability \textbf{45} (2016), no.~01, 139--163.

\bibitem{Das:2016to}
Bikramjit Das and Sidney Resnick, \emph{{Hidden Regular Variation under Full
  and Strong Asymptotic Dependence}}, arXiv.org (2016).

\bibitem{DH95}
Richard~A. Davis and Tailen Hsing, \emph{Point process and partial sum
  convergence for weakly dependent random variables with infinite variance},
  Ann. Probab. \textbf{23} (1995), no.~2, 879--917. \MR{MR1334176 (96g:60031)}

\bibitem{DM98}
Richard~A. Davis and Thomas Mikosch, \emph{The sample autocorrelations of
  heavy-tailed processes with applications to {ARCH}}, Ann. Statist.
  \textbf{26} (1998), no.~5, 2049--2080. \MR{MR1673289 (2000d:62135)}

\bibitem{Davis:2013db}
Richard~A Davis, Thomas~V Mikosch, and Yuwei Zhao, \emph{{Measures of serial
  extremal dependence and their estimation}}, Stochastic Processes and their
  Applications \textbf{123} (2013), no.~7, 2575--2602.

\bibitem{EKM97}
Paul Embrechts, Claudia Kl{\"u}ppelberg, and Thomas Mikosch, \emph{Modelling
  extremal events}, Springer-Verlag, Berlin, 1997. \MR{MR1458613 (98k:60080)}

\bibitem{HL06}
Henrik Hult and Filip Lindskog, \emph{Regular variation for measures on metric
  spaces}, Publ. Inst. Math. (Beograd) (N.S.) \textbf{80(94)} (2006), 121--140.
  \MR{MR2281910 (2008g:28016)}

\bibitem{Janssen:2016ki}
Anja Janssen and Holger Drees, \emph{{A stochastic volatility model with
  flexible extremal dependence structure}}, Bernoulli \textbf{22} (2016),
  no.~3, 1448--1490.

\bibitem{Kulik:2015cz}
Rafa{\l} Kulik and Philippe Soulier, \emph{{Heavy tailed time series with
  extremal independence}}, Extremes \textbf{18} (2015), no.~2, 273--299.

\bibitem{Lindskog:2014ia}
Filip Lindskog, Sidney Resnick, and Joyjit Roy, \emph{{Regularly varying
  measures on metric spaces: Hidden regular variation and hidden jumps}},
  Probability Surveys \textbf{11} (2014), 270--314.

\bibitem{RES87}
Sidney Resnick, \emph{Extreme values, regular variation and point processes},
  Springer, New York, 1987. \MR{MR2364939 (2008h:60002)}

\bibitem{Resnick:2007jl}
\bysame, \emph{{Heavy-Tail Phenomena}}, Springer Science {\&} Business Media,
  New York, 2007.

\end{thebibliography}

\end{document}